\documentclass[reqno]{amsart}
\usepackage{amssymb, amsmath, amsthm, amscd, mathrsfs}

\usepackage{paralist}
\usepackage{cite}
\usepackage{bigints}
\usepackage{dsfont}
\usepackage{empheq}
\usepackage{amssymb}
\usepackage{cases}
\usepackage{enumitem}
\allowdisplaybreaks

\usepackage{caption}
\usepackage{booktabs}

\usepackage{float}
\usepackage[ruled,vlined,linesnumbered,noresetcount]{algorithm2e}

\usepackage{hyperref}

\theoremstyle{plain}
\newtheorem{theorem}{Theorem}[section]
\newtheorem{lemma}[theorem]{Lemma}

\newtheorem{proposition}[theorem]{Proposition}
\newtheorem{definition}[theorem]{Definition}
\newtheorem{remark}{Remark}

\numberwithin{equation}{section}

\title[Controllability of Nonlocal PDEs] 
      {Controllability of Nonlocal Partial Differential Equations}

\author[D. Cardona]{Duván Cardona$^{1,2}$}
\author[S. E. Chorfi]{Salah-Eddine Chorfi$^{3}$}
\author[F. Et-tahri]{Fouad Et-tahri$^{3}$}
\author[L. Maniar]{Lahcen Maniar$^{3,4}$
\NoCaseChange{%
\\\\
\normalfont\small
$^1$~Department of Mathematics, King Fahd University of Petroleum and Minerals, Dhahran, Kingdom of Saudi Arabia.\\
$^2$~Department of Mathematics, Pontificia Universidad Javeriana, Bogot\'a-Colombia.\\
\texttt{duvan.sanchez@kfupm.edu.sa}\\
$^3$~Cadi Ayyad University, UCA, Faculty of Sciences Semlalia, Laboratory of Mathematics, Modeling and Automatic Systems, B.P. 2390, \\Marrakesh, Morocco\\
\texttt{s.chorfi@uca.ac.ma, f.ettahri@uca.ac.ma, maniar@uca.ac.ma}\\
$^4$~The UM6P-Vanguard Center, University Mohammed VI Polytechnic, Benguerir, Morocco.
}
}

\makeatletter 
\@namedef{subjclassname@2020}{%
  \textup{2020} Mathematics Subject Classification}
\makeatother

\subjclass[2020]{45K05, 93B05, 93B07, 93C20.}
 \keywords{Controllability, Observability, Nonlocal PDEs, Carleman estimate.}

\begin{document}
\begin{abstract}
This chapter provides a short survey of recent developments in the controllability of nonlocal partial differential equations. We first review controllability results for evolution equations involving nonlocal spatial operators. Particular attention is devoted to unique continuation properties, observability inequalities, and Carleman estimates in establishing controllability properties. We then discuss controllability issues for partial differential equations with memory terms, highlighting how memory effects influence reachable states, null controllability, and exact controllability. We review and compare various analytical techniques developed to overcome the difficulties arising from the nonlocal structure. We also provide a critical assessment of some of these methods. Throughout the chapter, we emphasize the interplay between controllability and observability, the adaptation of classical control methods to nonlocal settings, and the distinctive phenomena that arise in comparison with local equations. We then discuss geometric inequalities and their role in the controllability of diffusion models. The chapter concludes with a discussion of current challenges and several open problems, aiming to provide directions for future research.
\end{abstract}

\maketitle

\section{Introduction and motivation}

This survey provides an overview of recent developments in the controllability of nonlocal partial differential equations, spectral inequalities, and the geometric analysis of these problems. Indeed, these phenomena appear in the description of nonlocal spatial interactions, arising naturally in many physical, biological, and engineering models. Typical examples include diffusion processes with long-range interactions, population dynamics, neural networks, and materials science; see, e.g., \cite{Coo05, Du12, Sil00} and the references therein. In contrast with classical local PDEs, the evolution at a given point depends not only on nearby values of the state but also on values over a region of the spatial domain.

Let $\Omega \subset \mathbb{R}^d$ be a bounded domain with boundary of class $C^2$. For $T>0$, we set
$$Q:=\Omega \times(0, T) \quad \text{ and } \quad \Sigma:=\partial \Omega \times(0, T).$$
A prototypical linear nonlocal parabolic equation is given by
\begin{equation}\label{eq-nonlocal}
\begin{cases}
y_t-\Delta y+\displaystyle\int_\Omega K(x,\xi,t)y(\xi,t)\,d\xi
=\mathds 1_\omega u
& \text{in } Q,\\
y=0 & \text{on } \Sigma,\\
y(\cdot,0)=y_0 & \text{in } \Omega,
\end{cases}
\end{equation}
where $K$ is a given real kernel, $\omega \Subset \Omega$ is a nonempty open subset (the control region), and $u$ is the control input.

The controllability of nonlocal partial differential equations has received significant attention in recent years. In contrast to classical local models, nonlocal equations involve interactions that depend either on values of the state at distant spatial points or on its history. Such nonlocal effects naturally arise in numerous applications, including heat conduction with memory, viscoelasticity, population dynamics, and anomalous diffusion. The presence of nonlocal terms significantly alters the system's qualitative behavior and often requires the development of new analytical tools for control problems.

The presence of the integral term destroys the locality of the equation and introduces substantial difficulties in the analysis of
controllability properties. In particular, the adjoint system also contains nonlocal terms, making the derivation of observability
inequalities considerably more involved.

\subsection*{Brief literature review}
Among the well-known and powerful controllability approaches, we find the Lebeau--Robbiano--Zuazua method \cite{LR95,LebeauLebeau1998} and the Carleman estimate method \cite{FI96}, which provide observability inequalities and null controllability results for the heat equation. These methods have inspired many subsequent developments for nonlocal models.

The study of controllability for equations involving nonlocal spatial interactions was initiated by Fernández-Cara, L\"u and Zuazua \cite{FLZ16}, who established null controllability results for heat and wave equations with nonlocal space terms. Their work demonstrated that, under suitable assumptions on the nonlocal kernel, classical observability techniques can be adapted to nonlocal settings. For parabolic systems with kernel integrals satisfying suitable analyticity assumptions, Lissy and Zuazua \cite{LZ18} obtained internal controllability results through a combination of observability estimates and analyticity arguments. Nonlocal effects have also been studied in nonlinear equations; Micu and Takahashi \cite{MH18} proved local controllability to stationary trajectories for a Burgers equation with nonlocal viscosity. The semilinear case was investigated by Hern\'andez-Santamar\'ia and Le Balc'h \cite{HL21}, who established local null controllability for a class of nonlocal semilinear heat equations. More recently, Allal, Fragnelli, and Salhi \cite{AFS24} investigated null controllability for degenerate parabolic equations containing nonlocal space terms, extending controllability results to settings where both degeneracy and nonlocality are present.

Further developments were proposed by Biccari and Hernández-Santamaría \cite{BU19}, who considered the null controllability of a heat equation with an additive integral term using Carleman estimates. However, as we explain below, their argument contains a gap that prevents the proof from being completed.

A second major line of research concerns equations with memory. These models involve convolution terms depending on the past evolution of the state and arise naturally in heat conduction with memory and viscoelastic systems. Early contributions include the work of Barbu and Iannelli \cite{BI00}, who studied controllability issues for the heat equation with memory terms. However, unlike the classical heat equation, memory effects may prevent controllability properties from holding. In this direction, Ivanov and Pandolfi \cite{IP09} established a lack of controllability to rest for the heat equation with memory, while Guerrero and Imanuvilov \cite{GI13} obtained further negative results concerning null controllability. These works revealed fundamental obstructions caused by memory terms.

At the same time, several positive controllability results have been obtained under boundedness and analyticity assumptions. Zhou and Gao \cite{ZG14} studied approximate and null controllability for heat equations with memory, and Zhou and Zhang \cite{ZZ18} extended these investigations to degenerate parabolic equations. A relevant result was achieved by Chaves-Silva, Zhang and Zuazua \cite{CZE17}, who introduced a novel approach to the controllability of evolution equations with memory based on observability inequalities for suitable adjoint systems. This framework was further developed by L\"u, Zhang and Zuazua \cite{LZZ17}, who established null controllability results for wave equations with memory. Later, Allal and Fragnelli \cite{AF21} analyzed null controllability for degenerate parabolic equations with memory, combining Carleman estimates with techniques adapted to degenerate operators. The paper \cite{WZZ24} by Wang et al. characterizes sharp observability for heat equations with time-dependent analytic memory kernels through a new approach based on flow decomposition, time analyticity, and propagation of singularities.

Despite these significant advances, the controllability theory of nonlocal PDEs remains incomplete. Many fundamental questions remain open, including the treatment of general nonlocal kernels, the development of sharp observability inequalities, the controllability of nonlinear and fractional models, and the interaction between nonlocality, degeneracy, and memory effects. The purpose of this chapter is to provide a unified overview of the existing literature on controllability and observability for nonlocal PDEs, emphasizing the analytical methods employed, the main achievements obtained so far, their limitations, and the challenging open problems that continue to drive current research.

\subsection*{Outline of the chapter}
This chapter is divided into two main parts according to the nature of the nonlocality under consideration. Section~\ref{sec1} focuses on controllability problems for PDEs with nonlocal space terms, while Section~\ref{sec2} is devoted to equations with memory effects. In each case, we survey the principal controllability and observability results available in the literature, emphasizing the techniques used to obtain them, particularly Carleman estimates and related observability methods. The chapter concludes in Section~\ref{sec3} with a discussion of current challenges and several open problems that may guide future developments in the field.

\section{Controllability of PDEs with Nonlocal Space Terms}\label{sec1}

\subsection{Linear heat and wave equations with nonlocal space terms}

The first systematic controllability results for evolution equations with nonlocal spatial interactions were obtained by Fern\'andez-Cara, L\"u and Zuazua \cite{FLZ16}. They studied both heat and wave equations containing integral space terms and proved controllability results under boundedness and analyticity assumptions on the kernel. Their approach relied on Carleman estimates combined with compactness–uniqueness and analyticity-based unique continuation. It showed that nonlocal perturbations can be treated as lower-order terms when appropriate bounds on the kernel are available.

This paper established that many classical controllability results for local equations remain valid in the presence of nonlocal spatial
interactions, provided the kernel satisfies suitable regularity and boundedness assumptions.

\subsection{Heat equations with integral kernels}
Let us consider the controllability problem
\begin{equation}\label{eq-nonlocal1}
\begin{cases}
y_t-\Delta y+\displaystyle\int_\Omega K(x,\xi)y(\xi,t)\,d\xi
=\mathds 1_\omega u
& \text{in } Q,\\
y=0 & \text{on } \Sigma,\\
y(\cdot,0)=y_0 & \text{in } \Omega,
\end{cases}
\end{equation}
where $u$ is a control supported in the region $\omega \Subset \Omega$.

\begin{definition}
The system \eqref{eq-nonlocal1} is null controllable at time $T$ if for all $y_0\in L^2(\Omega),$ there exists a control $u\in L^2(Q)$ such that $y(\cdot,T)=0$ in $L^2(\Omega)$. 
\end{definition}

By duality (see, e.g., \cite{FI96}), the null controllability of \eqref{eq-nonlocal1} is equivalent to the observability inequality
\begin{equation}\label{obs}
\|\varphi(\cdot,0)\|_{L^2(\Omega)}^2 \le C \int_{\omega\times (0,T)} |\varphi|^2 dx dt \qquad \forall \varphi_T \in L^2(\Omega),
\end{equation}
where $\varphi$ is the solution of the adjoint problem
\begin{equation}\label{adj}
\begin{cases}
-\varphi_t-\Delta \varphi+\displaystyle\int_\Omega K(\xi,x)\varphi(\xi,t)\,d\xi
=0
& \text{in } Q,\\
\varphi=0 & \text{on } \Sigma,\\
\varphi(\cdot,T)=\varphi_T & \text{in } \Omega.
\end{cases}
\end{equation}

\subsection{First ideas on controllability}
Now, we present some first ideas by Fern\'andez-Cara, L\"u and Zuazua \cite{FLZ16} for the null controllability of nonlocal parabolic PDEs. The main idea relies on the duality approach and a compactness--uniqueness argument.

Consider the nonlocal heat equation \eqref{eq-nonlocal1}.
\begin{theorem}[see \cite{FLZ16}] 
Fix $T>0$ and assume the following conditions on the kernel $K \in L^2(\Omega\times\Omega)$
	\begin{enumerate}
		\item \begin{equation*}
		x\mapsto
		\int_\Omega K(\xi,x)f(\xi)\,d\xi
		\quad\text{is analytic for every }f\in L^2(\Omega),
	\end{equation*}
		\item
		\begin{equation*}
        \begin{cases}
		K(x,\xi)=\displaystyle\sum_{m,j\ge1}k_{mj}\phi_m(x)\phi_j(\xi),\\
		\|K\|_R^2:=\displaystyle\sum_{m\ge1}
		\left(
		\sum_{j\ge1}\lambda_j^{-1}|k_{mj}|^2
		\right)
		\lambda_m^{-1}
		e^{2R\sqrt{\lambda_m}}
		<\infty,
        \end{cases}
\end{equation*}
where $\{\phi_j\}_{j\ge1}$ denotes an orthonormal basis of eigenfunctions of the Dirichlet Laplacian with corresponding eigenvalues $\{\lambda_j\}_{j\ge1}$, and $R=R(\Omega,\omega,T)$ is a sufficiently large constant.
	\end{enumerate}
Then, the system \eqref{eq-nonlocal1} is null controllable at every $T>0$.
\end{theorem}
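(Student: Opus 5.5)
By duality it suffices to prove the observability inequality \eqref{obs} for solutions of the adjoint problem \eqref{adj}. I would follow the compactness--uniqueness strategy announced above, in three steps.

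\textbf{Step 1: relaxed observability.} First I would establish an observability inequality with a compact remainder,
\begin{equation*}
\|\varphi(\cdot,0)\|_{L^2(\Omega)}^2 \le C\int_{\omega\times(0,T)}|\varphi|^2\,dx\,dt + C\,\|\mathcal K^*\varphi\|_{Z}^2,
\end{equation*}
where $(\mathcal K^*\varphi)(x,t)=\int_\Omega K(\xi,x)\varphi(\xi,t)\,d\xi$ and $Z$ is a suitable weighted space. To do so, I would treat the nonlocal term as a source $f=-\mathcal K^*\varphi$ in the adjoint heat equation $-\varphi_t-\Delta\varphi=f$.

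The source must be absorbed by a cost sharper than the usual Carleman weight allows, because the nonlocal term is not pointwise controlled near $t=0,T$. Rather than a Fursikov--Imanuvilov Carleman estimate, I would use the Lebeau--Robbiano spectral inequality
\begin{equation*}
\sum_{\lambda_j\le\mu}|a_j|^2\le Ce^{C\sqrt\mu}\int_\omega\Big|\sum_{\lambda_j\le\mu}a_j\phi_j\Big|^2.
\end{equation*}
This yields a control cost for the heat equation that is explicit frequency by frequency. One then controls the interaction of the kernel with each frequency using the coefficients $k_{mj}$.

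The weights $\lambda_j^{-1}$ and $\lambda_m^{-1}e^{2R\sqrt{\lambda_m}}$ in $\|K\|_R$ are exactly designed for this. Writing $\varphi=\sum_j\varphi_j(t)\phi_j$, the $m$-th mode of $\mathcal K^*\varphi$ is $\sum_j k_{jm}\varphi_j$ (up to transposition). Cauchy--Schwarz with the $H^1_0$ energy $\sum_j\lambda_j|\varphi_j|^2$ then gives a bound by $\|K\|_R$ times the energy, provided $R$ dominates the Lebeau--Robbiano constant $C=C(\Omega,\omega,T)$. This fixes $R$ as sufficiently large.

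Equivalently, I would prove null controllability of the perturbed system by a fixed-point argument. Take controls $u$ of the pure heat equation steering $y_0$ to $0$ with source $-\mathcal K y$. Their cost in the frequency-weighted space $\sum_m\lambda_m^{-1}e^{2R\sqrt{\lambda_m}}|\cdot|^2$ is finite thanks to condition (2). The map $y\mapsto y[u(y)]$ is then compact on $L^2(Q)$, since $\mathcal K$ is Hilbert--Schmidt. Schauder's or Kakutani's theorem, or the Fredholm alternative, reduces controllability to a finite-codimension obstruction.

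\textbf{Step 2: removing the compact term.} A standard contradiction argument applies. If \eqref{obs} fails, there are $\varphi^n_T$ with $\|\varphi^n(0)\|=1$ and observations $\to0$. Compactness of $\mathcal K^*$ in $Z$ yields a nontrivial element of the finite-dimensional space $N$ of adjoint solutions vanishing on $\omega\times(0,T)$. The space $N$ is invariant under $\partial_t$, so it contains an eigenvector: $-\Delta\varphi+\mathcal K^*\varphi=\lambda\varphi$ in $\Omega$, with $\varphi=0$ on $\omega$.

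\textbf{Step 3: unique continuation (main obstacle).} This is where hypothesis (1) enters. In $\omega$ we have $\mathcal K^*\varphi=0$. Since $\mathcal K^*\varphi$ is analytic, it vanishes on the connected set $\Omega$. Thus $-\Delta\varphi=\lambda\varphi$ with $\varphi=0$ on $\omega$, and classical unique continuation gives $\varphi\equiv0$, contradicting nontriviality.

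I expect the genuine difficulty to be Step 1, namely the quantitative absorption of the nonlocal source. The cost $e^{C\sqrt{\lambda}}$ of controlling high frequencies must be beaten by the decay encoded in $\|K\|_R<\infty$, and the precise bookkeeping determines how large $R$ must be. Step 1 is also where I would check carefully that the remainder is genuinely compact rather than merely bounded.
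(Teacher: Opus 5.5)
Your Step 3 is the same unique-continuation mechanism the paper uses. It applies directly to a time-dependent limit $\varphi$ vanishing on $\omega\times(0,T)$, so the finite-dimensional space $N$ and the eigenvector reduction are unnecessary.

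\textbf{The gap is in Step 2.} You normalize $\|\varphi^n(\cdot,0)\|_{L^2(\Omega)}=1$ and then invoke ``compactness of $\mathcal K^*$ in $Z$''. A compact operator only helps on a bounded set, and this normalization bounds nothing. Energy estimates for \eqref{adj} control $\varphi(\cdot,0)$ by $\varphi(\cdot,t)$, never the converse. So $\varphi^n_T$, and $\varphi^n$ near $t=T$, may blow up while $\|\varphi^n(\cdot,0)\|=1$. You therefore have no bound on $\mathcal K^*\varphi^n$, no convergent subsequence, and no nontrivial limit. This is where the wave-equation template, in which a conserved energy bounds the whole trajectory, fails for parabolic problems. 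The remainder must be compact relative to a quantity that controls the trajectory, and $\|\varphi(\cdot,0)\|$ does not.

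Step 1 has a related problem. Cauchy--Schwarz against the $H^1_0$ energy $\sum_j\lambda_j|\varphi_j|^2$ of the full solution uses only the $\lambda^{-1}$ weights, not the factor $e^{2R\sqrt{\lambda_m}}$. Moreover, that energy is not controlled by the observation.

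\textbf{The missing idea is the paper's splitting.} Write $\varphi=p+\zeta$, with $p$ the free backward heat solution with the same datum $\varphi_T$, and normalize by $\|\varphi_T\|_h=\|p\|_{L^2(\omega\times(0,T))}=1$. This quantity does control everything. First, \eqref{eq:carleman_p} bounds $p$ on $\Omega\times(0,T-\delta)$. Second, \eqref{eq:spectral_norm} is where the exponential weight is genuinely used: since $p_m(t)=e^{-\lambda_m(T-t)}(\varphi_T,\phi_m)$, Cauchy--Schwarz in the input index $m$ gives
\[
\int_0^T\|\mathcal K^*p(t)\|_{H^{-1}(\Omega)}^2\,dt\le\tfrac12\|K\|_R^2\sum_{m\ge1}e^{-2R\sqrt{\lambda_m}}|(\varphi_T,\phi_m)|^2\le C\|K\|_R^2\|\varphi_T\|_h^2.
\]
Here the factor $\lambda_m^{-1}$ is absorbed by the time integral, and $R$ is taken large enough for \eqref{eq:spectral_norm} to hold.

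With this bound the contradiction argument closes:
\begin{enumerate}
\item $\zeta_n$ is bounded in $L^2(0,T;H^1_0(\Omega))\cap H^1(0,T;H^{-1}(\Omega))$, hence precompact in $L^2(Q)$ by Aubin--Lions.
\item Your Step 3 shows the weak limit of $\varphi_n$ is zero.
\item Then $\zeta_n\to0$ strongly, so $\|p_n\|_{L^2(\omega\times(0,T))}\le\|\varphi_n\|_{L^2(\omega\times(0,T))}+\|\zeta_n\|_{L^2(\omega\times(0,T))}\to0$, contradicting $\|\varphi^n_T\|_h=1$.
\end{enumerate}
Separately, the same source bound combined with energy estimates gives $\|\varphi(\cdot,0)\|\le C\|\varphi_T\|_h$, which completes \eqref{obs}.

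Your fixed-point/Fredholm alternative does not repair the gap. As you note, it only reduces controllability to a finite-codimension obstruction, and you never show that this obstruction is trivial.
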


\begin{proof}[Sketch of the main ideas of the proof]
	The proof relies on the Hilbert Uniqueness Method (HUM), combined with a compactness--uniqueness argument adapted to nonlocal operators.
    
	By duality, the null controllability of \eqref{eq-nonlocal1} is equivalent to proving the observability inequality \eqref{obs}.
	\medskip
    
	\noindent\textbf{Step 1. Decomposition of the adjoint solution.}
	The adjoint solution is decomposed as
	\begin{equation*}
	\varphi=p+\zeta,
	\end{equation*}
	where $p$ solves the classical backward heat equation 
	\begin{equation}\label{eq:adj heat}
		\begin{cases}
			-p_t-\Delta p=0, & \text{in }  Q,\\[1ex]
			p=0, & \text{on } \Sigma,\\
			p(\cdot,T)=\varphi_T, & \text{in } \Omega,
		\end{cases} 
	\end{equation}
	and $\zeta$ accounts for the nonlocal perturbation, i.e.,
	\begin{equation}
		\begin{cases}
			-\zeta_t-\Delta\zeta
			=-\displaystyle\int_\Omega
			K(\xi,x)\zeta(\xi,t)\,d\xi
			-\displaystyle\int_\Omega
			K(\xi,x)p(\xi,t)\,d\xi, &\text{in } Q,
			\\
			\zeta=0, &\text{on } \Sigma,
			\\
			\zeta(\cdot,T)=0, &\text{in } \Omega.
		\end{cases}
		\label{eq:zeta}
	\end{equation}
	This splitting separates the local dynamics from the nonlocal effects.
	
\medskip
\noindent\textbf{Step 2. Observability for the local component.}

The homogeneous backward heat equation \eqref{eq:adj heat} satisfies the
classical observability inequality. More precisely, introducing the quantity
\begin{eqnarray*}
\|\varphi_T\|_{h}^{2}
:=
\int_{0}^{T}\!\!\int_{\omega}|p(x,t)|^{2}\,dx\,dt,
\end{eqnarray*}
it is well known that $\|\cdot\|_{h}$ defines a norm on $L^{2}(\Omega)$.
Moreover, the global Carleman estimate for the heat equation (e.g., \cite{FI96}) implies the
weighted estimate
\begin{equation}\label{eq:carleman_p}
	\int_{Q}
	e^{-\frac{2C_0}{T-t}}
	|p|^{2}\,dxdt
	\le
	C
	\|\varphi_T\|_{h}^{2},
\end{equation}
where $C_0>0$ depends only on the geometry of the domain $\Omega$.

On the other hand, the spectral inequality (\cite{LR95}) yields
\begin{equation}\label{eq:spectral_norm}
	\sum_{j\ge1}
	e^{-2R\sqrt{\lambda_j}}
	|(\varphi_T,\phi_j)|^{2}
	\le
	C
	\|\varphi_T\|_{h}^{2},
\end{equation}

The proof of the observability estimate
\eqref{obs} is therefore reduced to establishing the following two inequalities:
\begin{align}
	\|\varphi_T\|_{h}^{2}
	&\le
	C
	\int_{0}^{T}\!\!\int_{\omega}
	|\varphi|^{2}\,dxdt,
	\label{eq:first_est}
	\\
	\|\varphi(\cdot,0)\|_{L^{2}(\Omega)}^{2}
	&\le
	C
	\|\varphi_T\|_{h}^{2}.
	\label{eq:second_est}
\end{align}
Indeed, combining \eqref{eq:first_est} and \eqref{eq:second_est} immediately gives \eqref{obs}.


\medskip
\noindent\textbf{Step 3. Proof of \eqref{eq:first_est}: compactness--uniqueness argument.}

The proof proceeds by contradiction. Assume that \eqref{eq:first_est} is false.
Then there exists a sequence
$\{\varphi_T^n\}_{n\ge1}\subset L^2(\Omega)$
such that
\begin{equation*}
	\|\varphi_T^n\|_{h}=1 \quad \text{and} \quad \int_{0}^{T}\!\!\int_{\omega}
	|\varphi_n|^{2}\,dxdt
	\rightarrow0,
\end{equation*}
where $\varphi_n$ denotes the solution of \eqref{adj} associated with the final datum $\varphi_T^n$.

Let $ \varphi_n=p_n+\zeta_n$, where $p_n$ solves the backward heat equation
\eqref{eq:adj heat} with final datum $\varphi_T^n$, and $\zeta_n$ satisfies \eqref{eq:zeta} for $p=p_n$.

The key point is to prove that the source term in \eqref{eq:zeta} is uniformly bounded in $L^2(0,T;H^{-1}(\Omega))$. Indeed, expanding the kernel in the eigenfunctions of the Dirichlet Laplacian and using the spectral representation of
$p_n$, together with assumptions on $K$ and estimate
\eqref{eq:spectral_norm},
one obtains
\begin{equation}
	\left\|
	\int_\Omega
	K(\xi,\cdot)p_n(\xi,\cdot)\,d\xi
	\right\|_{L^2(0,T;H^{-1}(\Omega))}
	\le C\|K\|_{R},
	\label{eq:forcing_bound}
\end{equation}
where the constant $C$ is independent of $n$.

Applying standard energy estimates to \eqref{eq:zeta}, we obtain
\begin{equation*}
\|\zeta_n\|_{L^2(0,T;H_0^1(\Omega))}
+
\|\partial_t\zeta_n\|_{L^2(0,T;H^{-1}(\Omega))}
\le C.
\end{equation*}
Hence, by the compact embedding $H^1_0(\Omega) \hookrightarrow L^2(\Omega)$ and the Aubin--Lions lemma, we can extract a subsequence such that
\begin{equation*}
\zeta_n
\rightarrow
\zeta
\qquad
\text{strongly in }
L^2(Q).
\end{equation*}
It remains to analyze the sequence
$\{\varphi_n\}$. Since
\begin{equation*}
\varphi_n=p_n+\zeta_n,
\end{equation*}
the estimate \eqref{eq:carleman_p}, together with the previous bounds, shows that $\{\varphi_n\}$ is uniformly bounded in
$L^2(\Omega\times(0,T-\delta))$ for every $\delta>0$. Therefore, up to another subsequence,
\begin{equation*}
\varphi_n
\rightharpoonup
\varphi
\qquad
\text{weakly in }
L^2(\Omega\times(0,T-\delta)).
\end{equation*}
Passing to the limit in the adjoint equation, we find that the limit function satisfies
\begin{equation*}
-\varphi_t-\Delta\varphi
+\int_\Omega
K(\xi,x)\varphi(\xi,t)\,d\xi=0
\quad\text{in }Q.
\end{equation*}
Moreover, the assumption
\begin{equation*}
\int_0^T\!\!\int_\omega |\varphi_n|^2
\rightarrow0
\end{equation*}
implies
\begin{equation*}
\varphi=0
\qquad
\text{in }\omega\times(0,T).
\end{equation*}
Since the mapping
\begin{eqnarray*}
x\mapsto
\int_\Omega
K(\xi,x)\varphi(\xi,t)\,d\xi
\end{eqnarray*}
is analytic and vanishes on the open set $\omega,$ it vanishes
everywhere in $\Omega$.
Consequently, $\varphi$ satisfies the classical backward heat equation and vanishes in the observation region $\omega$.

The unique continuation property for the heat equation then
gives $\varphi\equiv0$. Therefore, $\varphi_n\rightharpoonup0$ weakly in $L^2(\Omega\times(0,T-\delta))$. Using the boundedness of the integral operator, this implies that the source term in
\eqref{eq:zeta}
converges weakly to zero.
The energy estimates then yield
\begin{equation*}
\zeta_n
\rightarrow0
\qquad
\text{strongly in }
L^2(Q).
\end{equation*}
Finally,
since
\begin{equation*}
p_n=\varphi_n-\zeta_n,
\end{equation*}
we obtain
\begin{equation*}
\|p_n\|_{L^2(\omega\times(0,T))}
\longrightarrow0.
\end{equation*}
This contradicts the normalization
\begin{equation*}
\|p_n\|_{L^2(\omega\times(0,T))}
=
\|\varphi_T^n\|_{h}
=1.
\end{equation*}
Hence, estimate \eqref{eq:first_est} holds.


\medskip
\noindent\textbf{Step 4. Proof of \eqref{eq:second_est}.}

The estimate follows from the decomposition
$\varphi=p+\zeta$.

For the homogeneous part,
the monotonicity of the energy for the backward heat equation
gives
\begin{equation*}
\|p(\cdot,0)\|_{L^2(\Omega)}^{2}
\le
\|p(\cdot,t)\|_{L^2(\Omega)}^{2},
\qquad
0<t<T.
\end{equation*}

Integrating this inequality over a time interval contained in
$(0,T)$ and using the estimate
\eqref{eq:carleman_p},
one obtains
\begin{equation*}
\|p(\cdot,0)\|_{L^2(\Omega)}^{2}
\le
C
\|\varphi_T\|_{h}^{2}.
\end{equation*}
Next,
applying the energy estimate to equation
\eqref{eq:zeta}
yields
\begin{equation*}
\|\zeta(\cdot,0)\|_{L^2(\Omega)}^{2}
\le
C
\left\|
\int_\Omega
K(\xi,\cdot)p(\xi,\cdot)\,d\xi
\right\|_{L^2(0,T;H^{-1}(\Omega))}^{2}.
\end{equation*}
Using once again the spectral assumption on the kernel $K$ together
with estimate
\eqref{eq:spectral_norm},
we infer that
\begin{equation*}
\|\zeta(\cdot,0)\|_{L^2(\Omega)}^{2}
\le
C
\|\varphi_T\|_{h}^{2}.
\end{equation*}
Finally,
since
$\varphi=p+\zeta$,
the triangle inequality gives
\begin{equation*}
\|\varphi(\cdot,0)\|_{L^2(\Omega)}^{2}
\le
2\|p(\cdot,0)\|_{L^2(\Omega)}^{2}
+
2\|\zeta(\cdot,0)\|_{L^2(\Omega)}^{2}
\le
C
\|\varphi_T\|_{h}^{2},
\end{equation*}
which proves \eqref{eq:second_est}.

\end{proof}
\begin{remark}
	The main novelty of this approach is that it avoids establishing a Carleman estimate directly for the nonlocal equation. Instead, the nonlocal operator is viewed as a compact perturbation of the classical heat equation. The proof combines four key ingredients:
	\begin{enumerate}
		\item the classical observability inequality for the heat equation;
		\item the Lebeau--Robbiano spectral inequality;
		\item a compactness--uniqueness argument;
		\item the analyticity of the kernel, which restores the unique continuation property.
	\end{enumerate}
	This strategy has become a prototype for several subsequent controllability results involving lower-order nonlocal perturbations.
\end{remark}

Finally, we also mention \cite{FLNal19} for local exact controllability to the trajectories using a compactness–uniqueness argument and Kakutani’s fixed-point theorem.

\subsection{Carleman estimate method}
Recall that the standard method for deriving the observability inequality \eqref{obs} is based on a global Carleman estimate of the form
\begin{equation*}
\int_Q \rho^{-2}|\varphi|^2 d x d t \leq C \int_{\omega \times(0, T)} \rho^{-2}|\varphi|^2 d x d t,
\end{equation*}
where the constant $\mathcal{C}>0$ is independent of $\varphi_T$, and $\rho=\rho(x, t)$ is a suitable weight function that blows up as $t \to 0$ and $t \to T$. We refer to \cite{FI96} for more details. The Carleman estimate for parabolic equations yields
\begin{equation*}
\int_Q \rho^{-2}|\varphi|^2 d x d t \leq C \int_{\omega \times(0, T)} \rho^{-2}|\varphi|^2 d x d t  +C \int_{Q} \rho^{-2}\left|\int_{\Omega} K(\xi, x) \varphi(\xi, t) d\xi\right|^2 d x d t.
\end{equation*}
The last nonlocal term on the right-hand side cannot be absorbed by the left-hand side. This issue does not arise for classical potential terms acting locally in space, namely, for equations of the form
$$
-\varphi_t-\Delta \varphi+ p(x, t) \varphi(x, t)=0 .
$$

A key lemma to construct the Carleman weight function for parabolic equations reads as follows (see \cite[Lemma 1.1]{FI96}):
\begin{lemma}
Let $\omega \subset \subset \Omega$ be a nonempty open set. Then, there exists $\eta^0 \in C^2(\overline{\Omega})$ such that $\eta^0>0$ in $\Omega, \eta^0=0$ on $\partial \Omega$, and $\left|\nabla \eta^0\right|>0$ in $\overline{\Omega \backslash \omega}$.
\end{lemma}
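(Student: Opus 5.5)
The plan is to build $\eta^0$ as a small perturbation of a Morse function, and then move its critical points into $\omega$ with a diffeomorphism that fixes the boundary.

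\textbf{Step 1: a function with the right boundary behaviour.} Let $\delta(x)=\mathrm{dist}(x,\partial\Omega)$. Since $\partial\Omega\in C^2$, $\delta$ is $C^2$ in a collar $\{0\le\delta<\varepsilon\}$, with $|\nabla\delta|=1$ there and $\nabla\delta$ equal to minus the outward normal on $\partial\Omega$. Patch $\delta$ near the boundary with any positive $C^2$ function in the interior, using a cutoff. The result is $\psi\in C^2(\overline\Omega)$ with $\psi>0$ in $\Omega$, $\psi=0$ on $\partial\Omega$, and $|\nabla\psi|>0$ on the collar $\{\delta<\varepsilon/2\}$.

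\textbf{Step 2: make the critical points isolated.} Perturb $\psi$ on the compact set $\{\delta\ge\varepsilon/4\}$ by a small $C^2$ term, for instance $\psi+\sum a_ix_i$ multiplied by a cutoff, with $a$ chosen generic by Sard's theorem. Alternatively, approximate $\psi$ by a Morse function in the $C^2$ topology. Because the perturbation is small in $C^2$, the following are preserved: positivity in $\Omega$, the zero boundary values, and nonvanishing of the gradient on the collar. The resulting function has only nondegenerate critical points. These are isolated, so there are finitely many: $x_1,\dots,x_N$, all lying in $\{\delta\ge\varepsilon/2\}$.

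\textbf{Step 3: push the critical points into $\omega$.} This is the main step. Fix distinct points $y_1,\dots,y_N\in\omega$. Since $\Omega$ is connected and open, for $d\ge2$ there is a $C^\infty$ diffeomorphism $\Phi:\overline\Omega\to\overline\Omega$ with $\Phi(y_j)=x_j$ and $\Phi=\mathrm{id}$ in a neighbourhood of $\partial\Omega$. To build it, join each $y_j$ to $x_j$ by disjoint smooth paths inside a compact subset of $\Omega$. Then integrate compactly supported vector fields along these paths, which is a standard isotopy-extension argument.

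Set $\eta^0=\tilde\psi\circ\Phi$, where $\tilde\psi$ is the function from Step 2. Then $\eta^0\in C^2(\overline\Omega)$, $\eta^0>0$ in $\Omega$, and $\eta^0=0$ on $\partial\Omega$. Moreover $\nabla\eta^0(x)=D\Phi(x)^T\nabla\tilde\psi(\Phi(x))$ and $D\Phi$ is invertible, so $\nabla\eta^0$ vanishes exactly at $\Phi^{-1}(x_j)=y_j\in\omega$. Hence $|\nabla\eta^0|>0$ on $\overline{\Omega\setminus\omega}$, and this set is compact, so the bound is uniform.

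\textbf{The case $d=1$.} Here $\Omega=(a,b)$ and we only need one critical point. Take $\eta^0$ strictly increasing on $[a,x_0]$ and strictly decreasing on $[x_0,b]$, with $x_0\in\omega$ and $\eta^0(a)=\eta^0(b)=0$, for example a suitable $C^2$ concave profile peaked at $x_0$.

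\textbf{Main obstacle.} The hard part is Step 3: the existence of boundary-fixing diffeomorphisms that move finitely many interior points to prescribed interior points. It uses connectedness of $\Omega$, and for $d\ge2$ disjointness of the paths, which holds generically.
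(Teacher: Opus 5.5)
Your proposal is correct and matches the standard argument behind \cite[Lemma 1.1]{FI96}, which the paper only cites without giving a proof: build a function with the right boundary behaviour, make it Morse by a small $C^2$ perturbation so that it has finitely many critical points, then move them into $\omega$ by a diffeomorphism that is the identity near $\partial\Omega$. One small correction: for $d=2$, generic paths are not disjoint (they cross transversally), so either choose the arcs inductively, using that a smooth arc compactly contained in $\Omega$ does not disconnect it, or only require the moving points to be pairwise distinct at each time, which is generic for all $d\ge 2$ and is all the time-dependent vector-field construction needs.
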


For a parameter $\lambda>0$, we define
$$
\sigma(x):=e^{4 \lambda\left\|\eta^0\right\|_{\infty}}-e^{\lambda\left(2\left\|\eta^0\right\|_{\infty}+\eta^0(x)\right)},
$$
and we introduce the weight functions
\begin{equation}\label{2.1}
\alpha(x, t):=\frac{\sigma(x)}{t(T-t)}, \qquad \xi(x, t):=\frac{e^{\lambda\left(2\left\|\eta^0\right\|_{\infty}+\eta^0(x)\right)}}{t(T-t)}.
\end{equation}
Moreover, in what follows, we will use the notation
\begin{equation}
\begin{aligned}
\sigma^{+} & :=\max _{x \in \overline{\Omega}} \sigma(x)=e^{4 \lambda\left\|\eta^0\right\|_{\infty}}-e^{2 \lambda\left\|\eta^0\right\|_{\infty}}, \\
\sigma^{-} & :=\min _{x \in \overline{\Omega}} \sigma(x)=e^{4 \lambda\left\|\eta^0\right\|_{\infty}}-e^{3 \lambda\left\|\eta^0\right\|_{\infty}}. \label{2.2}
\end{aligned}
\end{equation}

An attempt was made by Biccari and Hern\'andez-Santamar\'ia \cite{BU19}, who considered the heat equation
with additive integral kernels $K=K(x, \xi, t) \in L^{\infty}(\Omega \times \Omega \times(0, T))$ such that
\begin{equation}
\mathcal{K}=: \sup_{(x, t) \in \overline{Q}} \exp \left(\frac{\sigma^{-}}{t(T-t)}\right) \int_{\Omega}|K(x, \xi, t)| d \xi<\infty, \tag{H}
\end{equation}
where $\sigma^{-}$ is the constant defined in \eqref{2.2} and depends on $\Omega$ and $\omega$.

The authors claimed null controllability results using Carleman estimates adapted to the nonlocal
structure under assumption $(\mathrm{H})$. However, their main result relies on the following proposition (see \cite[Proposition 2.3]{BU19}):
\begin{proposition}\label{prop1}
For any fixed $\lambda>0$ and $s>1$ it holds that
\begin{equation}\label{2.7}
\exp \left(-\frac{(1+s) \sigma^{-}}{t(T-t)}\right)<\exp \left(-\frac{s \sigma^{+}}{t(T-t)}\right),
\end{equation}
where $\sigma^{-}$ and $\sigma^{+}$ are defined in \eqref{2.2}.
\end{proposition}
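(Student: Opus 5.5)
The plan is to reduce \eqref{2.7} to an elementary inequality between the two constants $\sigma^{-}$ and $\sigma^{+}$ from \eqref{2.2}, and then test it honestly for all $\lambda>0$, $s>1$. I expect the computation to show that the claim cannot hold in the stated generality, so the aim is to identify the exact range of validity and exhibit a counterexample outside it.

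\textbf{Step 1: removing the time variable.} Fix $t\in(0,T)$, so that $t(T-t)>0$. Since $\exp$ is strictly increasing, \eqref{2.7} is equivalent to
\begin{equation*}
-\frac{(1+s)\sigma^{-}}{t(T-t)}<-\frac{s\sigma^{+}}{t(T-t)} .
\end{equation*}
Multiplying by $t(T-t)>0$, this becomes $(1+s)\sigma^{-}>s\sigma^{+}$, that is, $\sigma^{-}>s(\sigma^{+}-\sigma^{-})$. So the time dependence disappears and only a comparison of constants remains.

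\textbf{Step 2: computing the constants.} Write $M:=\|\eta^0\|_\infty>0$. From \eqref{2.2},
\begin{equation*}
\sigma^{+}-\sigma^{-}=e^{3\lambda M}-e^{2\lambda M},\qquad
\sigma^{-}=e^{4\lambda M}-e^{3\lambda M}=e^{\lambda M}\bigl(e^{3\lambda M}-e^{2\lambda M}\bigr).
\end{equation*}
Since $e^{3\lambda M}-e^{2\lambda M}>0$, we can divide by it. The condition $\sigma^{-}>s(\sigma^{+}-\sigma^{-})$ is then exactly $e^{\lambda M}>s$. Hence \eqref{2.7} holds, for every $t\in(0,T)$, if and only if $1<s<e^{\lambda\|\eta^0\|_\infty}$.

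\textbf{Step 3: the counterexample.} For a fixed $\lambda$, any $s\ge e^{\lambda\|\eta^0\|_\infty}$ gives the reverse (non-strict) inequality. At $s=e^{\lambda M}$ one even gets equality for every $t$. Thus Proposition~\ref{prop1} is false as stated: it is not true for all $s>1$.

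\textbf{The main obstacle.} The difficulty is not technical; it is recognising that the statement must be restricted rather than proved. In the Carleman argument of \cite{BU19}, $s$ must be taken large, $s\ge s_0(\lambda)$, with $s_0$ typically growing at least like $e^{2\lambda\|\eta^0\|_\infty}$. One then has to check whether the admissible window $s<e^{\lambda\|\eta^0\|_\infty}$ is compatible with $s\ge s_0(\lambda)$. I expect it is not, which would confirm that the absorption step based on \eqref{2.7} and hypothesis (H) breaks down. That is the gap the paper refers to.
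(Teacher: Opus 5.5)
Your disproof is correct, and it starts exactly as the paper does. After cancelling the positive factor $t(T-t)$, \eqref{2.7} becomes $(1+s)\sigma^{-}>s\sigma^{+}$, i.e.\ $\sigma^{+}/\sigma^{-}<(1+s)/s$, with no dependence on $t$.

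The two arguments part ways at the next step. The paper simply lets $s\to\infty$, obtains $\sigma^{+}\le\sigma^{-}$, and notes that this contradicts $\sigma^{+}>\sigma^{-}$. You instead compute, with $M=\|\eta^0\|_\infty$, that $\sigma^{+}-\sigma^{-}=e^{3\lambda M}-e^{2\lambda M}$ and $\sigma^{-}=e^{\lambda M}\bigl(e^{3\lambda M}-e^{2\lambda M}\bigr)$. This gives the sharp statement: \eqref{2.7} holds if and only if $1<s<e^{\lambda\|\eta^0\|_\infty}$.

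Each route buys something different:
\begin{itemize}
\item The paper's limiting argument is more robust. It uses only $\sigma^{+}>\sigma^{-}>0$, so it rules out any such bound uniform in large $s$ for every nonconstant weight, whatever its explicit formula.
\item Your computation is more informative. It gives an explicit counterexample: $s=e^{\lambda\|\eta^0\|_\infty}>1$ yields equality for every $t$. It also gives the exact coupling between $s$ and $\lambda$ that any corrected version of hypothesis (H) would have to encode. This makes quantitative the paper's remark that a repaired assumption must involve the Carleman parameter $s$.
\end{itemize}

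Your closing paragraph, on $s_0(\lambda)$ growing like $e^{2\lambda\|\eta^0\|_\infty}$ and on the window being incompatible with $s\ge s_0(\lambda)$, is not part of the disproof. The dependence of the Carleman threshold on $\lambda$ varies between versions of the estimate, and you do not verify it. Present it as a heuristic remark, not a confirmed conclusion. The paper itself only makes the weaker observation that a hypothesis depending on $s$ is of limited practical use, since $s$ must later be taken large.
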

This key proposition is incorrect. Indeed, the estimate \eqref{2.7} is equivalent to 
$$\frac{\sigma^{+}}{\sigma^{-}}<\frac{1+s}{s} \qquad \forall s>1.$$
For fixed $\lambda$, letting $s \to \infty$, we obtain $\sigma^{+}\leq \sigma^{-}$, which is a contradiction.

The error made in the proof of this proposition is contained in the last implication. The last implication incorrectly introduces the parameter $s$. Therefore, corrected assumptions can be given by invoking the parameter $s$ in $(\mathrm{H})$. However, since the Carleman parameter $s$ must be chosen sufficiently large later, such a condition is of limited practical usefulness. A suitable assumption on the kernel $K$ should reflect an intrinsic property of the model and yield a valid noncircular sufficient condition, rather than depend on the proof technique employed. We emphasize that the same criticism applies equally to similar works on systems with memory terms.

The study of semilinear models was subsequently addressed by Hern\'andez-Santamar\'ia and Le Balc'h \cite{HL21}, who established
local null controllability for a class of nonlocal semilinear heat equations. Their proof combined observability estimates for the
linearized problem with fixed-point arguments.

\subsection{Parabolic systems and analytic nonlocal terms}

Beyond scalar equations, controllability issues have also been studied for systems involving nonlocal couplings. Lissy and Zuazua
\cite{LZ18} investigated parabolic systems containing nonlocal kernels satisfying suitable analyticity assumptions and proved internal controllability results.
Their work highlighted the importance of analyticity assumptions in obtaining observability inequalities for coupled nonlocal systems.

These results illustrate that nonlocal interactions may preserve controllability properties even in complex multi-component systems, although the associated observability estimates become substantially more delicate.

\subsection{Nonlinear and degenerate equations}

Nonlocal effects frequently appear together with nonlinearities or degenerate diffusion mechanisms. Micu and Takahashi \cite{MH18}
studied a Burgers equation with nonlocal viscosity and proved local controllability to stationary trajectories. Their analysis combined
linearization techniques with inverse mapping arguments.

Subsequently, Allal, Fragnelli and Salhi \cite{AFS24} considered degenerate parabolic equations containing nonlocal space
terms. Using weighted Carleman estimates adapted to degenerate operators, they established null controllability results despite the
combined presence of degeneracy and nonlocality. This work represents one of the first contributions addressing these two difficulties
simultaneously.

\subsection{Controllability and geometric analysis of diffusion models for nonlocal elliptic operators}

This section describes the geometric approach developed in \cite{Cardona:Ruzhansky:Delgado:JGA2} by the first author together with J. Delgado and M. Ruzhansky for analyzing heat equations associated with higher-order elliptic differential operators and positive elliptic pseudo-differential operators.

To start with, we consider $(M,g)$ to be a compact $C^\infty$ Riemannian manifold. A fundamental result established by H. Donnelly and C. Fefferman in their influential paper \cite{DonnellyFefferman} shows that every eigenfunction of the Laplace operator $\Delta_g$ on $M$ satisfies a quantitative doubling estimate. More precisely, if
$$ 
-\Delta_g\phi=\lambda^2\phi,
$$
then there exist constants $C_1,C_2>0$, depending only on the manifold $M$ and independent of the radius $R>0$, such that
\begin{equation}\label{Fefferman-Donnelly}
\sup_{B(2R)}|\phi|\leq e^{C_1\lambda +C_2} \sup_{B(R)}|\phi|,
\end{equation}
where $B(R)$ and $B(2R)$ denote concentric geodesic balls. An important feature of this inequality is that it extends beyond individual eigenfunctions and continues to hold for finite sums of eigenfunctions of the Laplace--Beltrami operator.

A closely related problem concerns the geometry of nodal sets. In 1982, Yau proposed that if $M$ is an $n$-dimensional closed $C^\infty$ Riemannian manifold and $(\phi,\lambda^2)$ is an eigenpair of the Laplace operator, then the $(n-1)$-dimensional Hausdorff measure of the zero set should satisfy
$$
\lambda\lesssim H_{n-1}(\{x\in M:\phi(x)=0\})\lesssim \lambda.
$$
Here, $H_{n-1}$ denotes the $(n-1)$-dimensional Hausdorff measure. The doubling inequality \eqref{Fefferman-Donnelly} played a decisive role in this context: Donnelly and Fefferman used it to verify Yau's conjecture in the real-analytic setting; see \cite[Theorem 1.2]{DonnellyFefferman}.

This section reviews the results established in \cite{Cardona:Ruzhansky:Delgado:JGA2} concerning extensions of the doubling estimate \eqref{Fefferman-Donnelly} to finite sums of eigenfunctions of positive elliptic pseudo-differential operators $A$ on compact Lie groups. It also summarizes subsequent developments for positive pseudo-differential operators of arbitrary real order belonging to the global $(\rho,\delta)$-H\"ormander classes, together with recent advances in the more general setting of compact manifolds.

Before the results of \cite{Cardona:Ruzhansky:Delgado:JGA2}, it was unknown whether the doubling property could be established throughout the full range
$
0\leq \delta<\rho\leq 1,
$ 
for operators in the H\"ormander calculus. A related formulation of this problem had previously been brought to the attention of the first author by Zuazua. The compact Lie group framework adopted in \cite{Cardona:Ruzhansky:Delgado:JGA2} is particularly natural because, on arbitrary compact manifolds, the principal symbol of a pseudo-differential operator admits an invariant definition only under the restrictions $0\leq \delta<\rho\leq 1$ and $\rho\geq1-\delta$; see H\"ormander \cite{Hormander1985III}.

Rather than relying on Carleman estimates, as in the approach of Donnelly and Fefferman \cite{DonnellyFefferman}, the analysis developed in \cite{Cardona:Ruzhansky:Delgado:JGA2} follows a different strategy. This change in perspective is motivated by the lack of an appropriate Carleman theory for nonlocal operators on smooth manifolds. The central idea is to characterize those pseudo-differential operators for which a doubling inequality of the form \eqref{Fefferman-Donnelly} holds for finite sums of eigenfunctions by exploiting the representation-theoretic structure of compact Lie groups. To state the principal theorem proved in \cite{Cardona:Ruzhansky:Delgado:JGA2}, we first recall the notation and background needed in the sequel.

Following the strategy developed by Lebeau and Robbiano \cite{LR95}, controllability results for evolution equations associated with elliptic operators can be obtained from suitable spectral inequalities.

Let $E(x,D)\in \Psi^\nu_{1,0}(M)$ be a positive elliptic operator of order $\nu>0$ on a compact Riemannian manifold $(M,g)$, and consider the spectral data determined by
$$
E(x,D)\rho_j=\lambda_j^\nu\rho_j,\qquad
0\leq \lambda_j\leq\lambda_{j+1}\rightarrow\infty.
$$
In this direction, the following estimate was proved in \cite{Cardona} for positive elliptic differential operators on compact manifolds.
\begin{theorem}[\cite{Cardona}]\label{The:Spectral:Inequality}
Let $\nu\in 2\mathbb{N},$ and let $E(x,D)$ be a positive elliptic differential operator of order $\nu>0$ with smooth coefficients on $M.$ Let $(x,\xi)\in T^*M,$ and assume that for any $\xi\neq 0,$ the symbol satisfies $E(x,\xi)>0$. Then, for any non-empty open subset $\omega\subset M,$ we have
\begin{equation}\label{Spectral:Inequality:Intro2}
    \Vert \varkappa\Vert_{L^2(M)}\leq C_1e^{C_2 {\lambda}}\Vert \varkappa\Vert_{L^2(\omega)},\,\,\,\varkappa\in \textnormal{span}\{\rho_j:\lambda_j\leq \lambda\}.
\end{equation}
\end{theorem}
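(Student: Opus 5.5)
The plan is to reduce the spectral inequality for $E(x,D)$ to a Lebeau--Robbiano type interpolation (Carleman) estimate for an elliptic operator in one more variable. Since $\nu\in 2\mathbb{N}$ and the principal symbol is positive, the augmented operator
$$
P:=\partial_s^{\nu}+(-1)^{\nu/2+1}E(x,D)\quad\text{or, more conveniently,}\quad P:=(-1)^{\nu/2}\partial_s^{\nu}+E(x,D)
$$
acting on $X=(-S,S)\times M$ is elliptic of order $\nu$. Its symbol is $\sigma^{\nu}+E(x,\xi)$ (up to sign conventions), and this is positive for $(\sigma,\xi)\neq0$ precisely because $\nu$ is even and $E(x,\xi)>0$.

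For $\varkappa=\sum_{\lambda_j\le\lambda}a_j\rho_j$ we build an extension $F(s,x)=\sum_{\lambda_j\le\lambda}a_j\,\mathcal{S}(\lambda_j s)\rho_j(x)$. Here $\mathcal{S}$ solves the ODE $(-1)^{\nu/2}\mathcal{S}^{(\nu)}(t)+\mathcal{S}(t)=0$, and the normalization $\mathcal{S}(0)=0$, $\mathcal{S}'(0)=1$ is chosen so that $PF=0$ and $\partial_sF|_{s=0}=\varkappa$. This is the analogue of $\sinh(\sqrt{\lambda_j}s)/\sqrt{\lambda_j}$ in the Laplacian case; one chooses $\mathcal{S}$ as a combination of $e^{\mu t}$ over the $\nu$-th roots $\mu$ of $-(-1)^{\nu/2}$. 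Two bounds follow immediately from orthogonality and from $|\mathcal{S}(\lambda_j s)|\lesssim e^{C\lambda_j|s|}$:
$$
\|F\|_{H^{\nu}(X)}\le Ce^{C\lambda}\|\varkappa\|_{L^2(M)},\qquad \|\varkappa\|_{L^2(M)}\le \|\partial_sF(0,\cdot)\|_{L^2(M)}.
$$
The lower bound additionally uses that $F(0,\cdot)=0$.

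Next comes the key step, which I expect to be the main obstacle: an interpolation inequality for solutions of $PF=0$ with $F|_{s=0}=0$. Concretely, there exist $C>0$ and $\theta\in(0,1)$ such that
$$
\|F\|_{H^{\nu-1}((-S/2,S/2)\times M)}\le C\,\|F\|_{H^{\nu}(X)}^{1-\theta}\Big(\sum_{k<\nu}\|\partial_s^kF(0,\cdot)\|_{H^{\nu-1-k}(\omega)}\Big)^{\theta}.
$$
For the Laplacian this is proved by boundary Carleman estimates near $\{0\}\times\omega$ followed by interior propagation of smallness. For a higher-order elliptic operator with real (positive) principal symbol I would factor the principal symbol in $\sigma$ into $\nu$ first-order factors $\sigma-\mu_k(x,\xi)|\xi|$, with $\operatorname{Im}\mu_k\neq0$ and simple roots. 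I would then verify Hörmander's subellipticity/pseudoconvexity condition for a weight $e^{\tau\psi}$ adapted to the boundary piece $\{s=0\}\times\omega$ and use Calderón-type local Carleman estimates. I would first try $\psi$ as a small strictly convex perturbation of $-s$ near $s=0$, combined with a weight given by the Fursikov--Imanuvilov function $\eta^0$ from the Lemma above to propagate to all of $M$. The case of multiple characteristic roots may force restricting to simple-root operators or perturbing the weight. The Cauchy data at $s=0$ on $\omega$ are controlled by $\|\varkappa\|_{L^2(\omega)}$, since $F(0)=0$, $\partial_sF(0)=\varkappa$, and the higher $s$-derivatives are $E$-powers applied to $\varkappa$. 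These powers can be bounded by $e^{C\lambda}\|\varkappa\|_{L^2(\omega)}$ through local elliptic regularity, at the cost of shrinking $\omega$ slightly.

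Finally, I combine the three estimates. A trace inequality gives $\|\varkappa\|_{L^2(M)}\le C\|F\|_{H^{\nu-1}}$ near $s=0$, because $\nu-1\ge1$ so the $\partial_s$-trace is available. This yields
$$
\|\varkappa\|_{L^2(M)}\le C\big(e^{C\lambda}\|\varkappa\|_{L^2(M)}\big)^{1-\theta}\big(e^{C\lambda}\|\varkappa\|_{L^2(\omega)}\big)^{\theta}.
$$
Raising to the power $1/\theta$ and dividing gives \eqref{Spectral:Inequality:Intro2} with $C_2$ depending on $\theta$ and the growth constants.
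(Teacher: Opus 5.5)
Your route is not the paper's. You follow the Le Rousseau--Robbiano strategy: add a variable $s$ and prove a Carleman/interpolation estimate for the order-$\nu$ operator $P=(-1)^{\nu/2}\partial_s^\nu+E(x,D)$. The argument sketched in the paper never writes a Carleman estimate for such an operator. It works on $M\times(0,T)$ with global pseudo-differential tools on the closed manifold (the analytic functional calculus for $E$ and the Calder\'on--Vaillancourt theorem) to reach the interpolation inequality $\|F\|_{H^1(M\times(\delta,T-\delta))}\lesssim\|F\|_{H^1(M\times(0,T))}^{\kappa}\|F\|_{L^2(\omega)}^{1-\kappa}$. It presents this explicitly as a way around the unavailability of direct Carleman methods in this setting.

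In your proposal, the step you flag as the main obstacle is a genuine gap, not a technicality. Near $\{s=0\}$ your factorisation is harmless, since for $\xi\neq0$ the roots in $\sigma$ of $\sigma^\nu+E_\nu(x,\xi)$ are simple and non-real. But moving smallness from $\omega$ to the rest of $M$ requires points where the weight has $\partial_s\psi=0$. This is unavoidable when smallness is moved in the $x$-directions, and it happens along $s=0$ for a weight such as $\eta^0(x)-cs^2$. At covectors with $\sigma=0$ the characteristic roots of $P$ are then those of $E_\nu$, which are typically multiple.

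This already fails for $E=\Delta_g^2+V$. Take $\zeta=(0,\eta_x+i\tau\nabla_x\psi)$ with $\eta_x\perp_g\nabla_x\psi$ and $|\eta_x|_g=\tau|\nabla_x\psi|_g$. There the conjugated symbol $p_\psi$ vanishes together with all its first derivatives. Hence $\{\overline{p_\psi},p_\psi\}=0$, and no convexification of the weight restores H\"ormander's sub-ellipticity condition. Calder\'on's theorem does not apply either, since the roots are double.

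For $\Delta_g^2$ itself one can get around this with extra structure: $D_s^4+\Delta_g^2=(D_s^2+i\Delta_g)(D_s^2-i\Delta_g)$, whose second-order factors are of principal type. Nothing of this kind is available for a general $E$. Moreover, for smooth elliptic operators of order $\ge 4$ with multiple characteristics, even qualitative unique continuation can fail (Pli\'s). So your fallback of restricting to simple-root operators would not prove the stated theorem; it would exclude even $\Delta_g^2+V$.

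Several smaller points also need repair.
\begin{enumerate}
\item With $F=\sum a_j\mathcal S(\lambda_j s)\rho_j$ you get $\partial_sF(0)=\sum\lambda_j a_j\rho_j=E^{1/\nu}\varkappa$. This is nonlocal, and its restriction to $\omega$ is not controlled by $\varkappa|_\omega$. You need $\lambda_j^{-1}\mathcal S(\lambda_j s)$ (read as $s$ when $\lambda_j=0$). You also need $\mathcal S^{(k)}(0)=0$ for $2\le k\le\nu-1$; otherwise the Cauchy data contain the nonlocal terms $E^{(k-1)/\nu}\varkappa$.
\item Local elliptic regularity does not bound $\|\varkappa\|_{H^{\nu-2}(\omega)}$ by $e^{C\lambda}\|\varkappa\|_{L^2(\omega')}$ for sums of eigenfunctions, since $E\varkappa$ is not a multiple of $\varkappa$. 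Use instead $\|\chi\varkappa\|_{H^{\nu-2}}\le\|\chi\varkappa\|_{H^{2\nu-4}}^{1/2}\|\chi\varkappa\|_{L^2}^{1/2}$ and absorb the global factor in the final step.
\item For $\nu=2$, the $H^1$ norm does not control the trace of $\partial_sF$. Use the orthogonality lower bound for $\|F\|_{L^2}$ on a slab instead.
\end{enumerate}
These are repairable; the Carleman step is not, in the generality stated.
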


The proof of the spectral inequality in \cite{Cardona} relies on pseudo-differential techniques on compact manifolds without boundary, within the framework introduced by H\"ormander \cite{Hormander1985III}. This approach provides an alternative to the classical method of Donnelly and Fefferman \cite{DonnellyFefferman}, which is based on Carleman estimates. Such a modification is motivated by the fact that general pseudo-differential operators are nonlocal, and therefore the direct use of Carleman estimates is not available in this setting; see, for instance, the discussion in \cite[Subsection 1.4]{RousseauRobbiano2020}.

The argument in \cite{Cardona} combines tools from global pseudo-differential analysis on closed manifolds, including the Calder\'on--Vaillancourt theorem and the analytic functional calculus for elliptic operators. These techniques allow the derivation of an interpolation estimate of the form
\[
\Vert F\Vert_{H^1(M\times (\delta,T-\delta))}
\lesssim
\Vert F\Vert_{H^1(M\times (0,T))}^{\kappa}
\Vert F\Vert_{L^2(\omega)}^{1-\kappa},
\]
for $\kappa\in (0,1)$ and a suitable class of functions $F$, which plays a fundamental role in the proof of \eqref{Spectral:Inequality:Intro2}. A relevant difficulty in this framework is the possible nonlocality of the operators $E(x,D)$.

The result above admits a further extension to positive differential operators. In \cite{Cardona}, a Donnelly--Fefferman-type estimate was obtained for operators in the class $\Psi^\nu_{1,0}(M)$, thereby extending the classical doubling property of \cite{DonnellyFefferman}.

\begin{theorem}[\cite{Cardona}]\label{Donnelly-Type:Fefferman:Inequality}
Let $\nu>0,$ and let $E(x,D)\in \Psi^\nu_{1,0}(M)$ be an elliptic positive differential operator of order $\nu.$ Let $(x,\xi)\in T^*M,$ and assume that for any $\xi\neq 0,$ $E(x,\xi)>0$. 
For any $R>0$, let $B(x,R)$ be a ball defined by the geodesic distance, of radius $R>0$ and centered at $x.$ Then,
\begin{equation}\label{Donnelly-Fefferman2}
    \sup_{B(x,2R)}|\varkappa|\leq  e^{C_1' {\lambda}+C_2'} \sup_{B(x,R)}|\varkappa|,
    \,\,\,\varkappa\in \textnormal{span}\{\rho_j:\lambda_j\leq \lambda\},
\end{equation}
with $C_1'=C_{1}'(R)$ and $C_2'=C_2'(R)$ depending on the radius $R>0$ but not on $\varkappa.$
\end{theorem}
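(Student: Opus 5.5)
The plan is to derive the doubling estimate \eqref{Donnelly-Fefferman2} from a local interpolation (three-balls type) inequality for an augmented function, in the spirit of Lebeau--Robbiano and Jerison--Lebeau. Because $E(x,D)$ is nonlocal, Carleman estimates in $x$ are not available, so the interpolation will be obtained by pseudo-differential means, as in the proof of Theorem~\ref{The:Spectral:Inequality}.

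\textbf{Step 1: augmentation.} For $\varkappa=\sum_{\lambda_j\le\lambda}a_j\rho_j$, set
\[
F(x,t)=\sum_{\lambda_j\le\lambda}a_j\,\frac{\sinh(\lambda_j t)}{\lambda_j}\,\rho_j(x)
\]
(with the obvious modification when $\lambda_j=0$). This $F$ solves $\partial_t^2F-P^2F=0$ on $M\times(-T,T)$, where $P=E(x,D)^{1/\nu}\in\Psi^1_{1,0}(M)$ is defined by the functional calculus, and it satisfies $F(\cdot,0)=0$ and $\partial_tF(\cdot,0)=\varkappa$. The operator $\partial_t^2-P^2$ is an elliptic operator of order $2$ in $(x,t)$ with principal symbol $-(\tau^2+p(x,\xi)^2)$, which is nonvanishing thanks to the positivity $E(x,\xi)>0$. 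Two bounds on $F$ are immediate:
\[
\|F\|_{H^1(M\times(-T,T))}\le Ce^{C\lambda}\|\varkappa\|_{L^2},
\qquad
\|\varkappa\|_{L^2(B)}\lesssim\|F\|_{H^1(B\times(-\delta,\delta))}
\]
for any ball $B$, the second by the trace theorem applied to $\partial_tF$.

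\textbf{Step 2: local interpolation.} I would establish
\[
\|F\|_{H^1(B(x,2R)\times(-\delta,\delta))}\le C\,\|F\|_{H^1(M\times(-T,T))}^{\kappa}\,\|\partial_tF(\cdot,0)\|_{L^2(B(x,R))}^{1-\kappa}.
\]
The route is to microlocalize with cutoffs $\chi(x)$, use Calder\'on--Vaillancourt to control the commutators $[P^2,\chi]$, which lie in $\Psi^1$ with symbols rapidly decaying off $\operatorname{supp}\nabla\chi$, and exploit the analytic-hypoelliptic-type structure supplied by the analytic functional calculus to propagate the smallness from $B(x,R)\times\{0\}$ to $B(x,2R)\times(-\delta,\delta)$. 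This is the main obstacle, since the nonlocal tails of $P$ couple the near-ball region to all of $M$. I would absorb those tails into the global factor $\|F\|_{H^1(M\times(-T,T))}^{\kappa}$; this is exactly why $C_1',C_2'$ may depend on $R$.

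\textbf{Step 3: from $L^2$ to $\sup$.} Combining Steps 1 and 2 gives
\[
\|\varkappa\|_{L^2(B(x,2R))}\le e^{C\lambda+C}\,\|\varkappa\|_{L^2(B(x,R))}.
\]
To pass to sup norms, I use Sobolev embedding with elliptic regularity on finite spectral sums, namely $\|\varkappa\|_{H^s}\le C(1+\lambda)^{s}\|\varkappa\|_{L^2}$, so that $\sup_{B(x,2R)}|\varkappa|\le C(1+\lambda)^{s}e^{C\lambda}\|\varkappa\|_{L^2(B(x,2R'))}$ for a slightly larger radius $R'$, after rerunning the interpolation at that radius. In the other direction, $\|\varkappa\|_{L^2(B(x,R))}\le |B(x,R)|^{1/2}\sup_{B(x,R)}|\varkappa|$. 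Polynomial factors in $\lambda$ are absorbed into $e^{C_1'\lambda}$. Finally, uniformity in the center $x$ follows from compactness of $M$ and a finite covering by balls of radius $R/2$.
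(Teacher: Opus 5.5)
There is a genuine gap where you combine Steps 1 and 2. Your interpolation inequality has a local left-hand side but a global factor on the right, so inserting Step 1 gives only
\[
\|\varkappa\|_{L^2(B(x,2R))}\le C e^{C\kappa\lambda}\,\|\varkappa\|_{L^2(M)}^{\kappa}\,\|\varkappa\|_{L^2(B(x,R))}^{1-\kappa},
\]
and $\|\varkappa\|_{L^2(M)}$ cannot be absorbed into $\|\varkappa\|_{L^2(B(x,2R))}$. Absorption works only when the left-hand side is that same global quantity. This is why the interpolation estimate described in the paper has $M\times(\delta,T-\delta)$ on the left, bounded below by $\|\varkappa\|_{L^2(M)}$ using orthogonality of the $\rho_j$ over all of $M$. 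What that argument produces is the spectral inequality, not a local doubling.

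The same problem recurs in Step 3. For spectral sums, where $E\varkappa$ is not a multiple of $\varkappa$, and all the more for nonlocal $E$, the bound $\|\varkappa\|_{H^s}\le C(1+\lambda)^s\|\varkappa\|_{L^2}$ is global. It gives $\sup_{B(x,2R)}|\varkappa|\lesssim(1+\lambda)^s\|\varkappa\|_{L^2(M)}$, not a bound by $\|\varkappa\|_{L^2(B(x,2R'))}$. Likewise, $\|\varkappa\|_{L^2(B)}\lesssim\|F\|_{H^1(B\times(-\delta,\delta))}$ is not a trace inequality, since $\partial_tF$ of an $H^1$ function has no $L^2$ trace. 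Local regularity for $\partial_t^2-P^2$ would not rescue it, because it carries global smoothing tails. In each of these places what you actually need is the global-to-local estimate $\|\varkappa\|_{L^2(M)}\le Ce^{C\lambda}\|\varkappa\|_{L^2(B(x,R))}$. Step 2 as formulated does not deliver this, and Step 2 itself is only sketched.

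The paper gives no proof here, only the citation, but it places the estimate right after Theorem~\ref{The:Spectral:Inequality}. With $R$-dependent constants it is a direct corollary of that theorem, so no three-ball inequality is needed. A differential operator with $E(x,\xi)>0$ for $\xi\neq0$ automatically has even order, so Theorem~\ref{The:Spectral:Inequality} applies with $\omega=B(x,R)$. Writing $C_1,C_2$ for its constants and using Sobolev embedding with $s>\tfrac12\dim M$, you get
\[
\sup_{B(x,2R)}|\varkappa|\le\sup_M|\varkappa|\le C(1+\lambda)^s\|\varkappa\|_{L^2(M)}\le C(1+\lambda)^s\,C_1e^{C_2\lambda}\,\mathrm{vol}\bigl(B(x,R)\bigr)^{1/2}\sup_{B(x,R)}|\varkappa|.
\]
Then absorb $(1+\lambda)^s$ into the exponential. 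For uniformity in $x$, apply the spectral inequality on the finitely many balls $B(x_k,R/2)$ of a cover, using $B(x_k,R/2)\subset B(x,R)$ whenever $d(x,x_k)<R/2$.

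The same two lines work for any $E$ for which the spectral inequality is available. They also show that the $R$-dependence of $C_1',C_2'$ comes from the choice $\omega=B(x,R)$ and the volume factor, not from absorbing nonlocal tails. If you want to keep your augmentation approach, you must chain Step 2 along finitely many balls until the left-hand side covers $M$, which amounts to reproving Theorem~\ref{The:Spectral:Inequality}.
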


The spectral inequality above has direct applications to control theory. More precisely, using the Lebeau--Robbiano method \cite{LR95}, the estimate in Theorem \ref{The:Spectral:Inequality} yields null controllability results for fractional diffusion equations associated with elliptic operators on compact manifolds.

Consider the model problem
\begin{equation}\label{Main}
\begin{cases}
u_t(x,t)+ E(x,D)^\beta u(x,t)=g(x,t)\cdot \mathds{1}_\omega (x),& (x,t)\in M\times (0,T),\\
u(0,\cdot)=u_0,
\end{cases}
\end{equation}
where $T>0$. The problem is to determine the range of parameters $(\beta,\nu)$ for which, given any initial datum $u_0$, there exists a control function $g\in L^2(M\times(0,T))$ such that the corresponding solution satisfies
$$
u(x,T)=0,\qquad x\in M.
$$

The first null controllability result in this direction was obtained by Lebeau and Robbiano \cite{LR95}, who considered the heat equation generated by the Laplace--Beltrami operator $E(x,D)=-\Delta_g$ with $\beta=1$, including manifolds with smooth boundary. Later, Miller \cite{Miller2006} proved a general abstract result in separable Hilbert spaces showing that spectral inequalities imply null controllability for fractional powers $(-\Delta_g)^\beta$ whenever $\beta>1/2$. The spectral inequality for Laplace-Beltrami operators was also discussed in \cite{Lu2013}.

The extension of this framework to positive elliptic differential operators of arbitrary order was established in \cite{Cardona}. The following theorem, proved in \cite{Cardona}, gives the corresponding null controllability result for the fractional diffusion equation.

\begin{theorem}[\cite{Cardona}]\label{Main:theorem2}
Let $\nu\in 2\mathbb{N},$ and let $E(x,D)$ be a positive elliptic differential operator of order $\nu>0$ on $M$ with smooth coefficients. Let $(x,\xi)\in T^*M,$ and assume that for any $\xi\neq 0,$ $E(x,\xi)>0$. Then,
for any $\beta>1/\nu,$ the fractional diffusion problem in \eqref{Main} is null controllable at any time $T>0.$ 
\end{theorem}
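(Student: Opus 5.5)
The plan is to run the Lebeau--Robbiano method in the abstract form of Miller \cite{Miller2006}, with the spectral inequality of Theorem~\ref{The:Spectral:Inequality} as the only geometric input. Set $A:=E(x,D)^\beta$. This is a nonnegative self-adjoint operator on $L^2(M)$ with the same eigenfunctions $\rho_j$ as $E(x,D)$ and eigenvalues $\mu_j:=\lambda_j^{\nu\beta}$. By HUM, null controllability of \eqref{Main} at time $T$ is equivalent to the final-state observability inequality
\begin{equation*}
\|e^{-TA}v\|_{L^2(M)}^2\le C_T\int_0^T\|e^{-tA}v\|_{L^2(\omega)}^2\,dt,\qquad v\in L^2(M),
\end{equation*}
for the adjoint semigroup (since $A$ is self-adjoint, the adjoint problem is the same fractional heat equation, run backward).

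\textbf{Step 1 (spectral inequality in terms of $A$).} The spectral projector of $A$ onto $\{\mu_j\le\Lambda\}$ coincides with that of $E(x,D)$ onto $\{\lambda_j\le \Lambda^{1/(\nu\beta)}\}$. Theorem~\ref{The:Spectral:Inequality} therefore gives
\begin{equation*}
\|\varkappa\|_{L^2(M)}\le C_1e^{C_2\Lambda^{\theta}}\|\varkappa\|_{L^2(\omega)},\qquad \varkappa\in\mathrm{span}\{\rho_j:\mu_j\le\Lambda\},\quad \theta:=\frac{1}{\nu\beta}.
\end{equation*}
The hypothesis $\beta>1/\nu$ is exactly the condition $\theta<1$.

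\textbf{Step 2 (dissipation of high frequencies).} For $v\perp\mathrm{span}\{\rho_j:\mu_j\le\Lambda\}$, the spectral theorem gives $\|e^{-tA}v\|\le e^{-t\Lambda}\|v\|$.

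\textbf{Step 3 (Lebeau--Robbiano iteration, or Miller's theorem).} Split $(0,T)$ into dyadic intervals of length $T_k\sim 2^{-k}T$ and use frequency cutoffs $\Lambda_k=2^{ak}$. On each interval, the low-frequency part is controlled with cost $e^{C\Lambda_k^\theta}$: this comes from Step~1 together with the standard partial observability $\|\Pi_{\Lambda}e^{-T_kA}v\|^2\lesssim T_k^{-1}e^{2C_2\Lambda^\theta}\int_0^{T_k}\|e^{-tA}v\|_{L^2(\omega)}^2dt$. The high-frequency part is then left to decay freely for time $T_k$, which gives a gain $e^{-T_k\Lambda_k}$. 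Because $\theta<1$, one can choose $a$ so that $\Lambda_k^\theta\ll T_k\Lambda_k$, i.e.\ $2^{ak\theta}\ll 2^{k(a-1)}$, which holds for $a>1/(1-\theta)$. With this choice the product of costs and gains telescopes into a convergent series, and the observability inequality follows with a finite constant $C_T$. Equivalently, one can invoke directly Miller's abstract theorem \cite{Miller2006}: a spectral inequality with exponent $\theta<1$ implies final-state observability at every $T>0$, with $C_T\lesssim e^{c/T^{\theta/(1-\theta)}}$.

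\textbf{Main obstacle.} The delicate point is the bookkeeping in Step~3. One must balance $\Lambda_k^\theta$ against $T_k\Lambda_k$ and verify that the errors left over from each cutoff can be absorbed at the next stage. This is precisely where the threshold $\beta>1/\nu$ enters: for $\theta\ge1$ the iteration fails, since the cost of controlling the low frequencies outgrows the dissipation of the high ones. Once Theorem~\ref{The:Spectral:Inequality} is available, everything else is functional calculus.
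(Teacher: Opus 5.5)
Your proposal follows the paper's route: rewrite the spectral inequality of Theorem~\ref{The:Spectral:Inequality} in terms of the eigenvalues $\lambda_j^{\nu\beta}$ of $E(x,D)^\beta$, so that the cost becomes $e^{C_2\Lambda^{\theta}}$ with $\theta=1/(\nu\beta)<1$ exactly when $\beta>1/\nu$. You then feed it into the Lebeau--Robbiano strategy in Miller's abstract form \cite{Miller2006}.

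The one slip is the partial observability displayed in Step~3. The spectral inequality bounds $\|\Pi_\Lambda e^{-T_kA}v\|^2$ by the observation of the projected solution $e^{-tA}\Pi_\Lambda v$. It bounds it by the full observation $\int_0^{T_k}\|e^{-tA}v\|_{L^2(\omega)}^2\,dt$ only up to an extra error term of order $e^{2C_2\Lambda^\theta-T_k\Lambda}\|v\|^2$, which is precisely what the iteration absorbs. State it that way if you write the iteration out rather than citing Miller's theorem.
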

\section{Controllability of PDEs with Memory Terms}\label{sec2}

Memory effects arise naturally in many applications where the current state of a system depends not only on its present configuration but
also on its history. Such physical phenomena appear in heat conduction with memory effects, viscoelasticity, diffusion processes in heterogeneous
media, and biological models. From a mathematical viewpoint, memory effects are commonly represented by convolution terms involving the
history of the state variable.

One of the first contributions to the controllability of equations with memory terms is due to Barbu and Iannelli \cite{BI00}, who studied the heat equation with memory and established controllability results under suitable assumptions on the memory kernel. Their work showed that the controllability properties of memory systems depend strongly on the kernel and cannot be inferred directly from those of the corresponding memoryless equation.

These pioneering results stimulated further investigations into the interaction between memory effects and control mechanisms, leading to
a deeper understanding of both positive and negative controllability phenomena.

A typical example is the heat equation with a memory term
\begin{equation}\label{eq-memory}
\begin{cases}
y_t-\Delta y+\displaystyle\int_0^t M(t-s)y(s)\,ds
=\mathds 1_\omega u
& \text{in } Q,\\
y=0 & \text{on } \Sigma,\\
y(\cdot,0)=y_0
& \text{in } \Omega,
\end{cases}
\end{equation}
where $M$ denotes the memory kernel and $u$ is the control acting on a subset $\omega \Subset \Omega$ of the spatial domain.

Although memory terms may appear as lower-order perturbations, their presence profoundly affects the controllability properties of the
system. Unlike equations with nonlocal space terms, memory effects introduce a dependence on the entire past trajectory, leading to new
phenomena that do not arise in classical evolution equations. Consequently, the controllability theory of PDEs with memory terms has
developed into a distinct and active area of research.

\subsection{Negative controllability results}

A remarkable feature of memory equations is that controllability may fail even when the corresponding classical equation is controllable. This phenomenon was first highlighted by Ivanov and Pandolfi \cite{IP09}, who proved that the heat equation with memory terms may fail to be null controllable to rest. Their analysis demonstrated that the memory term can create intrinsic obstructions preventing the state from being driven exactly to zero.

Further negative results were obtained by Guerrero and Imanuvilov \cite{GI13}. Through a detailed spectral analysis of the adjoint
system, they identified additional situations where null controllability does not hold. These works established that memory
effects fundamentally alter the controllability landscape and that classical parabolic intuition is often insufficient.

The existence of such counterexamples motivated the search for alternative notions of controllability and for conditions under which positive results can still be recovered.

\subsection{Further controllability results}

Despite the aforementioned difficulties, several positive and negative controllability results have been obtained. Zhou and Gao
\cite{ZG14} investigated controllability for heat equations with constant memory terms $M=a\in \mathbb{R}^*$. They proved that approximate controllability holds while null controllability fails. Their work demonstrated that approximate controllability can often be restored for exponential memory kernels.

The case of degenerate parabolic equations with memory was later considered by Zhou and Zhang \cite{ZZ18}. They extended the analysis to settings where both degeneracy and memory effects are present, showing that approximate controllability properties hold in an appropriate state space with a boundary control acting either on the degenerate or on the nondegenerate part of the boundary. However, null controllability fails again in this weakly degenerate case.

Recently, Allal and Fragnelli \cite{AF21} studied null controllability for degenerate parabolic equations with memory kernels $b(t,s,x)$ decaying near the final time. By combining Carleman estimates adapted to degenerate operators with careful estimates of the memory contribution, they obtained new controllability results extending the existing theory beyond uniformly parabolic frameworks.

\subsection{A unified approach to memory systems}

A systematic framework for the controllability of evolution equations with memory for particular kernel structures was developed by Chaves-Silva, Zhang and Zuazua \cite{CZE17}, using moving control supports. Rather than viewing memory terms as perturbations, they developed a technique for studying the controllability in the presence of memory terms. Their approach was based on the introduction of suitable observability inequalities for adjoint systems containing memory effects.

One of the key ideas was the distinction between classical null controllability and memory-type controllability, where both the state
and the accumulated memory are required to vanish at the final time. This perspective clarified several phenomena observed in earlier works and provided a unified methodology applicable to a broad class of evolution equations.

\subsection{Time-fractional PDEs}
Here, we discuss some controllability issues for time-fractional equations. We refer to \cite{LZ16} for the details.

Let $H$ be a Hilbert space and $A$ a linear unbounded operator on $H$, which generates a $C_0$-semigroup $\{S(t)\}_{t \geq 0}$ on $H$. Let $U$ be another Hilbert space and $B \in \mathcal{L}(U, H)$ be the control operator. For $\alpha \in (0,1)$, we consider the following control system:
\begin{equation}\label{tfeq}
\left\{\begin{array}{l}
\partial_{t}^\alpha y=A y + B u \qquad\text{ in }(0, \infty), \\
y(0)=y_0,
\end{array}\right.
\end{equation}
with an initial datum $y_0 \in H$ and a control $u \in L^2(0, T ; U)$. Here, the Caputo derivative $\partial_{t}^\alpha y$ is defined by
\begin{equation}\label{cd}
    \partial_t^{\alpha} y(t) :=\displaystyle \frac{1}{\Gamma(1-\alpha)} \int^t_0 (t-s)^{-\alpha} y^\prime(s) \, d s,
\end{equation}
where $\Gamma(\cdot)$ is the Gamma function.

Such a fractional model appears when modeling slow and anomalous diffusion phenomena in heterogeneous or porous media.

Recall that, in the classical sense, the system \eqref{tfeq} is said to be null controllable at time $T>0$ if for any $y_0 \in H$, there is a control $u \in L^2(0, T; U)$ such that the corresponding solution $y$ satisfies $y(T)=0$.

The above definition for time-fractional systems like \eqref{tfeq} gives only a ``partial" but not full null controllability. Indeed, due to memory effects arising from the integral term \eqref{cd}, the fact that the solution satisfies $y(T)=0$ does not guarantee that the solution stays at rest for $t\ge T$ when the control action stops. This issue does not arise in the integer-order case $\alpha=1$.

Therefore, a stronger notion of null controllability is needed for time-fractional systems.
\begin{definition}\label{defsc}
The system \eqref{tfeq} is strongly null controllable at time $T>0$ if for any $y_0 \in H$, there is a control $u \in L^2(0, T; U)$ such that the corresponding solution $y$ satisfies $y(t)=0$ for all $t\ge T$.
\end{definition}

Next, we present the lack of controllability of \eqref{tfeq} in the above stronger sense.
\begin{theorem}
The system \eqref{tfeq} is not strongly null controllable at any $T>0$ in the sense of Definition \ref{defsc}, for every bounded control operator $B$.
\end{theorem}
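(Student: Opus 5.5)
We argue by contradiction, following the strategy of \cite{LZ16}. Suppose that for some $T>0$ and some $y_0\neq 0$ (we may take any nonzero datum) there is a control $u\in L^2(0,T;U)$ with $y(t)=0$ for all $t\ge T$. Extend $u$ by zero for $t>T$. The idea is to exploit the fact that the Caputo derivative keeps the whole history in memory. After time $T$ the equation says
\[
\frac{1}{\Gamma(1-\alpha)}\int_0^T (t-s)^{-\alpha}y'(s)\,ds = \partial_t^\alpha y(t) = Ay(t)+Bu(t)=0 \qquad \text{for all } t>T.
\]
Thus the function $t\mapsto \int_0^T (t-s)^{-\alpha}y'(s)\,ds$ must vanish identically on $(T,\infty)$. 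To avoid regularity issues with $y'$, I would rather use the equivalent integral (Riemann--Liouville) formulation: $y(t)-y_0=\frac{1}{\Gamma(\alpha)}\int_0^t (t-s)^{\alpha-1}(Ay(s)+Bu(s))\,ds$ in the mild/weak sense. Pairing with a vector $\psi\in D(A^*)$ makes all quantities scalar and well defined.

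\textbf{Key step (analyticity in $t$).} For $t>T$, the map
\[
F(t):=\frac{1}{\Gamma(\alpha)}\int_0^T (t-s)^{\alpha-1}\big\langle y(s),A^*\psi\big\rangle + \big\langle Bu(s),\psi\big\rangle\,ds
\]
extends holomorphically to $\{\operatorname{Re} t>T\}$, since the kernel $(t-s)^{\alpha-1}$ is analytic there. Because $y\equiv 0$ and $u\equiv 0$ on $(T,\infty)$, the integral formulation gives $F(t)=-\langle y_0,\psi\rangle$ for all $t>T$. So $F$ is constant. On the other hand, as $t\to\infty$ we have $(t-s)^{\alpha-1}\to 0$ uniformly for $s\in[0,T]$, because $\alpha-1<0$. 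Hence $F(t)\to 0$, and therefore $\langle y_0,\psi\rangle=0$. Since $D(A^*)$ is dense in $H$, this forces $y_0=0$, a contradiction.

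\textbf{Main obstacle.} The delicate point is making the solution concept rigorous, so that the integral identity holds with $y\in C([0,\infty);H)$. This requires the Mittag-Leffler (subordinated) solution operators built from $S(t)$. It also requires checking that $\langle y(s),A^*\psi\rangle$ is integrable on $[0,T]$, which is immediate since $y$ is continuous with values in $H$. Boundedness of $B$ ensures $\langle Bu,\psi\rangle\in L^2(0,T)$, so the decay estimate $|F(t)|\le C(t-T)^{\alpha-1}\big(\|y\|_{L^1(0,T;H)}+\|u\|_{L^1(0,T;U)}\big)$ holds with $C$ depending on $\psi$. The decay alone suffices, and analyticity is in fact not needed. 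I would first verify that the weak formulation tested against $D(A^*)$ is indeed satisfied by the mild solution. This is standard via Laplace transform, using the representation $\hat y(\lambda)=\lambda^{\alpha-1}(\lambda^\alpha-A)^{-1}y_0+(\lambda^\alpha-A)^{-1}B\hat u(\lambda)$ for large $\operatorname{Re}\lambda$.
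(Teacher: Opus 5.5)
Your argument is correct, and it takes a genuinely different route from the paper's.

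The paper stays with the Caputo form. From $\partial_t^\alpha y=0$ on $(T,\infty)$ it obtains $\int_0^T (t-s)^{-\alpha}\langle y'(s),\varphi\rangle\,ds=0$. It then differentiates infinitely often in $t$, substitutes $\sigma=(t-s)^{-1}$, and uses Weierstrass density (a moment argument) to conclude $y'\equiv 0$ on $[0,T)$, hence $y\equiv y(T)=0$ on $[0,T]$.

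You instead pass to the Volterra form $y=y_0+I^\alpha(Ay+Bu)$ tested against $D(A^*)$. Since the total forcing $Ay+Bu$ vanishes after $T$, the memory tail is $O((t-T)^{\alpha-1})$, and this forces $\langle y_0,\psi\rangle=0$ directly.

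\textbf{What your route buys.} It is shorter and more robust. It needs no moment problem and no differentiation under the integral sign. It also never has to give meaning to $y'$; the paper needs $s\mapsto\langle y'(s),\varphi\rangle_{D(A^*)',D(A^*)}$ to be integrable on $(0,T)$, which requires extra justification. You use only the weak formulation, which you correctly identify via the Laplace transform.

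\textbf{What the paper's route buys.} It is local in time. It only needs $y\equiv 0$ on some interval $[T,T']$, so it also shows the state cannot be held at rest even for a finite time after the control is switched off. Your decay argument as written uses $t\to\infty$. However, the analyticity you set aside recovers this local version as well: $F$ is analytic on $(T,\infty)$, so $F\equiv-\langle y_0,\psi\rangle$ on $(T,T')$ propagates to all $t>T$, and the decay then gives $y_0=0$.

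\textbf{A minor inaccuracy.} For $\alpha\le 1/2$ and $u\in L^2(0,T;U)$, the mild solution need not be continuous on $[0,T]$. This does not affect the argument, because you only use $y\in L^1(0,T;H)$, which holds by Young's inequality since $s^{\alpha-1}\in L^1(0,T)$.
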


\begin{proof}
Let us assume that the system is strongly null controllable for $T>0$ in the sense of Definition \ref{defsc}. Let $y_0\neq 0$ and take $u(t)=0, \forall t \geq T$. Then, using the definition of the fractional derivative and the identity $y \equiv 0$ on $[T,\infty)$, we obtain
$$
 \int_0^t \frac{y^\prime(s)}{(t-s)^\alpha} \mathrm{d} s=Ay(t)+Bu(t)=0 \quad \forall t \geq T. \qquad
$$
This also yields
$$
\int_0^T \frac{y^\prime(s)}{(t-s)^\alpha} \mathrm{d} s=0 \quad \forall t \geq T .
$$
Thus, for any $\varphi \in D\left(A^*\right)$,
$$
 \int_0^T \frac{\langle y^\prime(s), \varphi\rangle_{D\left(A^*\right)^{\prime}, D\left(A^*\right)}}{(t-s)^\alpha} \mathrm{d} s=0, \quad \forall t>T .
$$
By differentiating the above identity under the integral sign with respect to $t$, we obtain
$$
\int_0^T \frac{\langle y^{\prime}(s), \varphi\rangle_{D\left(A^*\right)^{\prime}, D\left(A^*\right)}}{(t-s)^{\alpha+j}} \mathrm{~d} s=0 \quad \forall t>T, j \in\{0\} \cup \mathbb{N} .
$$
By the change of variables $\sigma=(t-s)^{-1}$, we have
$$
\int_{\frac{1}{t}}^{\frac{1}{t-T}}\left\langle y^{\prime}\left(t-\frac{1}{\sigma}\right), \varphi\right\rangle_{D\left(A^*\right)^{\prime}, D\left(A^*\right)} \sigma^{\alpha+j-2} \mathrm{~d} \sigma=0 \quad \forall t>T, \quad j \in\{0\} \cup \mathbb{N} .
$$
Setting
$$
f(\sigma)=\left\langle y^{\prime}\left(t-\frac{1}{\sigma}\right), \varphi\right\rangle_{D\left(A^*\right)^{\prime}, D\left(A^*\right)} \sigma^{\alpha-2},
$$
we then obtain
$$
\int_{\frac{1}{t}}^{\frac{1}{t-T}} f(\sigma) \sigma^j \mathrm{~d} \sigma=0 \quad \forall t>T, \quad j \in\{0\} \cup \mathbb{N} .
$$
The previous equality, together with the Weierstrass density theorem, implies that
$$
f(\cdot) \equiv 0 \quad \text { in }\left[\frac{1}{t}, \frac{1}{t-T}\right).
$$
Hence,
$$
y^{\prime} \equiv 0 \quad \text{ in }[0, T).
$$
Thus, $y(T)=0$ yields
$$
y \equiv 0 \quad \text{ in \,\, }[0, T].
$$
This contradicts $y_0\neq 0.$
\end{proof}
\bigskip

As for the observability inequality, let $z_0\in D\left(A^*\right)$ and consider the following adjoint system associated with \eqref{tfeq}:
\begin{equation}\label{tfadj}
\begin{aligned}
&\left\{\begin{array}{l}
\partial_{T^-}^\alpha z^\tau-A^* z^\tau=\dfrac{\alpha z_0}{\Gamma(1-\alpha)(\tau-t)^{1+\alpha}} \qquad\text { in }(0, T), \\
z^\tau(T)=0,
\end{array}\right.
\end{aligned}
\end{equation}
where the right-sided Caputo derivative $\partial_{T^-}^\alpha z$ is defined by
$$
\partial_{T^-}^{\alpha} z(t) :=-\frac{1}{\Gamma(1-\alpha)} \int_t^T (s-t)^{-\alpha} z^\prime(s) \, d s.
$$
Variants of this adjoint system were also considered in the literature.

Next, we state the duality between strong null controllability in the sense of Definition \ref{defsc} and a strong observability inequality.
\begin{proposition}
The system \eqref{tfeq} is strongly null controllable in time $T$ in the sense of Definition \ref{defsc} if and only if the adjoint system \eqref{tfadj} is initially observable, i.e., there is a constant $C>0$ such that for any $\tau \in(T,\infty)$ and $z_0 \in D\left(A^*\right)$, the following observability inequality holds
$$
\left\|\frac{1}{\Gamma(1-\alpha)} \int_0^T\left[\frac{z^\tau(t)}{t^\alpha}+\frac{z_0}{\tau^\alpha}\right] \mathrm{d} t\right\|_H \leq C\left\|B^* z^\tau\right\|_{L^2(0, T; U)}.
$$
\end{proposition}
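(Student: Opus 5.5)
The plan is to follow the standard HUM duality scheme, but with the state at time $T$ replaced by the quantity that fully encodes the memory. First I rewrite strong null controllability as a condition on the Caputo history. If $y(t)=0$ for all $t\ge T$ and the control is switched off after $T$, then for $t>T$ the equation reduces to
\begin{equation*}
\frac{1}{\Gamma(1-\alpha)}\int_0^T (t-s)^{-\alpha}y'(s)\,ds=0 .
\end{equation*}
Conversely, if this identity holds for all $t>T$ and $y(T)=0$, then $y\equiv0$ on $[T,\infty)$ solves the free equation, and uniqueness gives the converse. Integrating by parts, using $y(T)=0$, turns the condition into a family of linear constraints on the pair $(y_0,y|_{(0,T)})$, indexed by $\tau\in(T,\infty)$. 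These are exactly the constraints that can be paired against $z_0\in D(A^*)$ with the weight $(\tau-t)^{-1-\alpha}$ appearing on the right-hand side of \eqref{tfadj}.

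The second step is the duality identity. For fixed $\tau>T$ and $z_0\in D(A^*)$, I test \eqref{tfeq} against the solution $z^\tau$ of \eqref{tfadj} and use the fractional integration-by-parts formula that links the left Caputo derivative with the right Caputo derivative $\partial^\alpha_{T^-}$. The aim is an identity of the form
\begin{equation*}
\Big\langle y_0,\ \frac{1}{\Gamma(1-\alpha)}\int_0^T\Big[\frac{z^\tau(t)}{t^\alpha}+\frac{z_0}{\tau^\alpha}\Big]dt\Big\rangle_H+\int_0^T\langle u,B^*z^\tau\rangle_U\,dt=\Big\langle \frac{1}{\Gamma(1-\alpha)}\int_0^T\frac{y'(s)}{(\tau-s)^{\alpha}}\,ds,\ z_0\Big\rangle .
\end{equation*}
In this identity the right-hand side vanishes for every admissible $\tau$ and $z_0$ exactly when the strong null condition holds. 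The boundary terms at $t=0$ produce the $t^{-\alpha}$ and $\tau^{-\alpha}$ contributions, and the source in \eqref{tfadj} produces the right-hand side after one more integration by parts in $s$. This computation is the main obstacle. It requires care with the singular kernels at $t=0$ and at $t=\tau$, where $\tau>T$ keeps the source smooth on $[0,T]$, and with the regularity of $z^\tau$ needed to justify each step. I would first prove the identity for $y_0\in D(A)$ and smooth $u$, and then pass to the general case by density.

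With the identity in hand, the equivalence follows by the usual functional-analytic argument. For observability $\Rightarrow$ control, consider the subspace
\begin{equation*}
\mathcal{S}=\{B^*z^\tau:\ \tau>T,\ z_0\in D(A^*)\}\subset L^2(0,T;U),
\end{equation*}
and define on it the linear functional
\begin{equation*}
B^*z^\tau\mapsto-\Big\langle y_0,\frac{1}{\Gamma(1-\alpha)}\int_0^T\Big[\frac{z^\tau}{t^\alpha}+\frac{z_0}{\tau^\alpha}\Big]dt\Big\rangle .
\end{equation*}
The observability inequality shows that this functional is well defined and bounded, with norm at most $C\|y_0\|$. One point to check is that linear combinations over different $\tau$ are handled, either by working with the span and noting the linearity in $z_0$, or by observing that the inequality for single $\tau$ suffices since the constraints are indexed separately. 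Hahn--Banach and Riesz then give $u\in L^2(0,T;U)$ with $\|u\|\le C\|y_0\|$ that realizes the functional, and the identity forces the strong null condition. For control $\Rightarrow$ observability, strong null controllability together with the closed graph theorem yields a bounded control map $y_0\mapsto u$. Inserting it into the identity and taking the supremum over $\|y_0\|\le1$ gives the inequality.
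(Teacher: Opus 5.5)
The paper states this proposition without proof (it defers to the literature on time-fractional systems), so I compare your sketch with the standard duality argument. Your ``only if'' direction (a bounded control map via the closed graph theorem, then the duality identity) works once the identity is corrected. The ``if'' direction, however, has a genuine gap exactly at the point you flag.

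Write $\Theta(\tau,z_0)$ for the $H$-valued quantity inside the norm on the left of the inequality. The set $\mathcal S$ is a union over $\tau$ of subspaces, not a subspace. So Hahn--Banach must be applied on its span, i.e.\ to sums $\sum_i B^*z^{\tau_i}$, where $z^{\tau_i}$ is the adjoint solution for $(\tau_i,z_0^i)$. Your functional is well defined and bounded there only if
\[
\Big\|\sum_i\Theta(\tau_i,z_0^i)\Big\|_H\le C\Big\|\sum_i B^*z^{\tau_i}\Big\|_{L^2(0,T;U)},
\]
and this does not follow from the single-$\tau$ inequality. Linearity in $z_0$ handles only one $\tau$ at a time. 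The remark that ``the constraints are indexed separately'' gets the logic backwards: slice-wise Hahn--Banach gives, for each $\tau$, a control $u_\tau$ killing the constraint at that $\tau$, whereas strong null controllability needs a single $u$ killing all of them at once. Compare $\langle u,v\rangle=a_1$ and $\langle u,v\rangle=a_2$ with $a_1\neq a_2$: each is solvable alone, but not both together.

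The natural dual statement is the superposition inequality above, i.e.\ observability for the adjoint with source $\frac{\alpha}{\Gamma(1-\alpha)}\sum_i z_0^i(\tau_i-t)^{-1-\alpha}$, and your ``only if'' argument in fact proves it. For the proposition as stated, the ``if'' direction can only be closed by another route: the single-$\tau$ inequality never holds. Well-posedness of the time-reversed adjoint problem gives $\sup_{[0,T]}\|z^\tau\|_H\le C(\tau-T)^{-1-\alpha}\|z_0\|_H$ with $C$ independent of $\tau$. Hence the right-hand side and the $\int_0^Tt^{-\alpha}z^\tau\,dt$ part of $\Theta$ are $O(\tau^{-1-\alpha})\|z_0\|_H$, while the $z_0$ part of $\Theta$ has exact order $\tau^{-\alpha}\|z_0\|_H$. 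Letting $\tau\to\infty$ with $z_0\neq0$ gives a contradiction. So both sides of the equivalence always fail, consistent with the non-controllability theorem, but your duality argument does not show this.

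Second, the identity you aim for is not correct. Since it must hold for every $u$, not only for those producing $y(T)=0$, this matters. Because $z^\tau(T)=0$, you have $\int_0^T\langle z^\tau,\partial_t^\alpha y\rangle\,dt=-\big\langle y_0,\frac{1}{\Gamma(1-\alpha)}\int_0^T t^{-\alpha}z^\tau\,dt\big\rangle+\int_0^T\langle y,\partial^\alpha_{T^-}z^\tau\rangle\,dt$, and using the two equations one gets
\begin{multline*}
\Big\langle y_0,\frac{1}{\Gamma(1-\alpha)}\Big[\int_0^T\frac{z^\tau(t)}{t^\alpha}\,dt+\frac{z_0}{\tau^\alpha}\Big]\Big\rangle_H+\int_0^T\langle u,B^*z^\tau\rangle_U\,dt\\
=\frac{1}{\Gamma(1-\alpha)}\Big\langle\frac{y_0}{\tau^\alpha}+\alpha\int_0^T\frac{y(s)}{(\tau-s)^{1+\alpha}}\,ds,\ z_0\Big\rangle_H .
\end{multline*}
Integrating by parts, the right-hand side equals $\frac{1}{\Gamma(1-\alpha)}\big\langle(\tau-T)^{-\alpha}y(T)-\int_0^T(\tau-s)^{-\alpha}y'(s)\,ds,\ z_0\big\rangle$. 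Your version omits the $y(T)$ term and reverses the sign of the history term. The corrected right-hand side is exactly the constraint from your first step. Its vanishing for all $\tau>T$ is equivalent to $y\equiv0$ on $[T,\infty)$, since it forces $y(T)=0$ through its blow-up as $\tau\downarrow T$.

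Note also that the $z_0$ term is $\tau^{-\alpha}z_0$, not $T\tau^{-\alpha}z_0$ as placing it under $\int_0^T\cdots dt$ (as in the displayed inequality) would give. The case $A=0$, $B=0$, $y\equiv y_0$ confirms this: there $\frac{1}{\Gamma(1-\alpha)}\int_0^Tt^{-\alpha}z^\tau\,dt=\frac{(\tau-T)^{-\alpha}-\tau^{-\alpha}}{\Gamma(1-\alpha)}z_0$, and the identity balances only with the coefficient $\tau^{-\alpha}$.
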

In contrast to the integer-order case $\alpha=1$, we should observe not only the datum $z_0$ but also the history of the adjoint system solution $z$ in $[0, T]$, i.e., $\displaystyle\int_0^T \frac{z^\tau(t)}{t^\alpha} \mathrm{d} t$.

As for the classical controllability notion, the null controllability property has been studied in \cite{Xu17} for a time-fractional heat equation with the Caputo derivative of order $\alpha\in (0,1)$:
\begin{equation}\label{eq-frac}
\begin{cases}
\partial_t^\alpha y-\Delta y =\mathds 1_\omega u
& \text{in } Q,\\
y=0 & \text{on } \Sigma,\\
y(\cdot,0)=y_0 & \text{in } \Omega,
\end{cases}
\end{equation}
where $\omega$ is a measurable subset with positive measure.

The proof relies on duality and a spectral inequality to establish the observability inequality. However, there are some issues with this proof, which will be discussed in a forthcoming paper.

\section{Conclusion and open problems} \label{sec3}

In this chapter, we have surveyed recent developments in the controllability theory of partial differential equations involving nonlocal effects. Two major classes of nonlocal models have been considered: equations with nonlocal space terms and equations with memory. Although both types of nonlocality introduce long-range interactions, they lead to different mathematical challenges and require distinct analytical approaches.

For equations with nonlocal space terms, significant progress has been achieved over the last decade. Building upon classical observability techniques and Carleman estimates, several authors have established approximate and null controllability results for linear, semilinear, and degenerate equations. These works demonstrate that, under suitable assumptions on the nonlocal kernel, some controllability properties of local equations can be preserved. Nevertheless, the derivation of observability inequalities remains considerably more delicate than in the local setting due to the global nature of the spatial interactions.

The situation is markedly different for equations with memory. While positive controllability results have been obtained for several classes of models, memory effects may fundamentally alter the dynamics and even destroy classical controllability properties. The emergence of notions such as memory-type controllability has revealed the need for new perspectives and analytical tools that explicitly account for the influence of the history of the system. The development of observability inequalities adapted to memory equations has played a central role in this direction.

Despite the substantial advances reviewed in this chapter, the controllability of nonlocal PDEs remains an active area of research. Several important questions remain open and constitute promising directions for future research.

\begin{enumerate}
\item \textbf{Spatial integral terms with less restrictive kernel
assumptions.}
A natural direction is to determine which assumptions on the nonlocal kernel are essential for controllability and which can be
weakened. This includes kernels with limited regularity or singularities. The objective is to identify structural conditions
that yield observability estimates, together with counterexamples clarifying the limits of such extensions.

\item \textbf{Nonlinear nonlocal equations.}
An important direction is to identify conditions under which controllability of a linearized nonlocal equation extends to the
nonlinear system. Local controllability near a reference trajectory should be distinguished from global controllability. For
state-dependent kernels or nonlinearities beyond the globally Lipschitz setting, central issues include the existence of
controlled solutions on the prescribed time interval and observability estimates that are uniform over the relevant
family of linearized equations.

\item \textbf{Coupled systems and network models.}
For coupled PDEs and evolution equations on networks, a natural question is whether controls acting on selected components can
steer the entire system to the prescribed target. This requires relating controllability to the structure of the coupling
operators, the location of the controls, and, for network models, the graph geometry and transmission conditions. A particular
challenge is to formulate verifiable algebraic or spectral criteria for indirect control in the presence of nonlocal
couplings.

\item \textbf{Interaction between degeneracy, memory, and spatial
nonlocality.}
Another direction concerns equations in which degenerate diffusion interacts simultaneously with spatial integral terms
and memory effects. The formulation must specify the location and strength of the degeneracy, as well as whether the memory
acts on the state or on the diffusion operator. The objective is to distinguish the obstructions caused by degeneracy from those introduced by memory or spatial nonlocality.

\item \textbf{Memory-type controllability and controllability
to rest.}
Endpoint null controllability, simultaneous control of the state and selected memory variables, and controllability to rest
represent distinct objectives. For a general memory kernel, cancellation of the memory term at the terminal time need not
eliminate its subsequent contribution. A central question is therefore to characterize the kernel structures for which
suitable terminal constraints ensure that the system remains at rest after the control is switched off. For more general
kernels, one must determine which features of the history need to be controlled, taking into account the known
non-controllability results, particularly for time-fractional equations.

\item \textbf{Spectral inequalities for nonlocal elliptic operators
on manifolds with boundary.}
For specified positive self-adjoint elliptic realizations on compact manifolds with boundary, a natural objective is to
establish Lebeau--Robbiano-type spectral inequalities under additional structural assumptions ensuring quantitative unique
continuation. The operator domain and boundary conditions must be included in the formulation, since ellipticity and positivity
alone do not guarantee such inequalities for general nonlocal operators. Boundary value problems within the Boutet de Monvel
calculus or suitable extensions provide a framework for investigating this question. One possible strategy is to adapt
the interpolation approach of Lebeau and Zuazua \cite{LebeauLebeau1998}, while accounting for the additional
boundary contributions and nonlocal terms.
\end{enumerate}

Finally, the rapid growth of the literature over the past years illustrates both the mathematical richness and the practical relevance of nonlocal PDEs. As new models continue to emerge from applications, the development of controllability and observability theories capable of addressing increasingly complex nonlocal phenomena is expected to remain an active and fruitful area of research for years to come.

\end{document}